\documentclass[11pt]{amsart}
\usepackage{amsmath,amsfonts,amsthm,amssymb}
\usepackage[pdfborder={0 0 0 [0 0 ]}]{hyperref}
\usepackage{enumerate}
\usepackage{mathrsfs}
\usepackage{enumerate}
\usepackage{graphicx}
\usepackage{color}
\usepackage[ps,all,arc,rotate,cmtip]{xy}
\usepackage{changepage}
\usepackage[capitalise]{cleveref}

%%%%%%%%%%%%%%%%%%%%%%%%%%%%%%%%%%%
\hypersetup{
    colorlinks=true,
    linkcolor=black,
    citecolor=black,
    filecolor=black,
    urlcolor=black,
}
%%%%%%%%%%%%%%%%%%%%%%%%%%%%%%%%%%%

%\usepackage[small,nohug,heads=vee]{diagrams}
%\diagramstyle[labelstyle=\scriptstyle]

\long\def\symbolfootnote[#1]#2{\begingroup
\def\thefootnote{\fnsymbol{footnote}}\footnote[#1]{#2}\endgroup}

\newtheorem*{theorem*}{Theorem}
\newtheorem{theorem}{Theorem}[section]

\newtheorem{lem}[theorem]{Lemma}
\newtheorem{thm}[theorem]{Theorem}
 \crefname{thm}{Theorem}{Theorems}

\newtheorem{prop}[theorem]{Proposition}

\newtheorem{cor}[theorem]{Corollary}
\crefname{cor}{Corollary}{Corollaries}

\theoremstyle{definition}

\newtheorem{rmk}[theorem]{Remark}

\newtheorem{dfn}[theorem]{Definition}

\newtheorem*{convention}{Convention}

\newtheoremstyle{maintheorem}{}{}{\itshape}{}{\bfseries}{}{.5em}{#1 \!\thmnote{#3}.}
\theoremstyle{maintheorem}
\newtheorem*{mainthm}{Theorem}

\newcommand{\R}{\mathbb{R}}
\newcommand{\Z}{\mathbb{Z}}

%% Sebastian's commands
\newcommand{\Out}{\mathrm{Out}}
\newcommand{\Aut}{\mathrm{Aut}}

\renewcommand{\P}{\mathcal{P}}

%\newcounter{dawidcomments}
%\newcommand{\dawid}[1]{\textbf{\color{red}(D\arabic{dawidcomments})}\marginpar{\textbf{\color{red}(D\arabic{dawidcomments})Dawid:} #1}\addtocounter{dawidcomments}{1}}
%
%\newcounter{sebcomments}
%\newcommand{\seb}[1]{\textbf{\color{red}(S\arabic{sebcomments})}\marginpar{\textbf{\color{red}(S\arabic{sebcomments})Seb:} #1}\addtocounter{sebcomments}{1}}

\def\link-preserving{}%\textbf{(R)}\marginpar{\textbf{\textsc{link-preservingness} }}}

%% Dawid's commands
\renewcommand{\leq}{\leqslant}
\renewcommand{\geq}{\geqslant}

\def\im{\mathrm{im}}
\def\into{\hookrightarrow}
\def\iff{if and only if }
\def\wrt{with respect to }

\def\s-{\smallsetminus}

\def\Stab{\mathrm{Stab}}
\newcommand{\Isom}{\operatorname{Isom}}

\begin{document}

\title{Nielsen Realisation by Gluing: Limit Groups and Free Products}
\author{Sebastian Hensel}
\author{Dawid Kielak}\thanks{The second-named author was supported by the SFB 701}

\begin{abstract}
  \noindent We generalise the Karrass--Pietrowski--Solitar and the
  Nielsen realisation theorems from the setting of free groups to that
  of free products. As a result, we obtain a fixed point theorem for
  finite groups of outer automorphisms acting on the relative free
  splitting complex of Handel--Mosher and on the outer space of a free
  product of Guirardel--Levitt, as well as a relative version of the
  Nielsen realisation theorem, which in the case of free groups
  answers a question of Karen Vogtmann. We also prove Nielsen
  realisation for limit groups, and as a byproduct obtain a new proof that
  limit groups are CAT($0$).

  The proofs rely on a new version of Stallings' theorem on groups
  with at least two ends, in which some control over the behaviour of
  virtual free factors is gained.
%We also prove Nielsen realisation for limit groups.
\end{abstract}

\maketitle

\section{Introduction}
In its original form, the \emph{Nielsen realisation problem} asks
which finite subgroups of the mapping class group of a surface can be
realised as groups of homeomorphisms of the surface. A celebrated
result of Kerckhoff~\cite{Kerckhoff1980,Kerckhoff1983} answers this in the positive for all finite
subgroups, and even allows for realisations by isometries of a suitable
hyperbolic metric.

Subsequently, similar realisation results were found in other
contexts, perhaps most notably for realising finite groups in
$\Out(F_n)$ by isometries of a suitable graph (independently by
\cite{culler1984}, \cite{Khramtsov1985}, \cite{Zimmermann1981};
compare \cite{hop} for a different approach).

\medskip In this article, we begin to develop a \emph{relative}
approach to Nielsen realisation problems. The philosophy here is that
if a group $G$ allows for a natural decomposition into pieces, then
Nielsen realisation for $\Out(G)$ may be reduced to realisation in the
pieces, and a \emph{gluing problem}. In addition to just solving
Nielsen realisation for finite subgroups of $\Out(G)$, such an
approach yields more explicit realisations, which also exhibit the
structure of pieces for $G$.

We demonstrate this strategy for two classes of
groups: free products and limit groups. In another article, we use the
results presented here, together with the philosophy of relative Nielsen
realisation, to prove Nielsen realisation for certain right-angled Artin
groups (\cite{HenselKielak2016}).

\smallskip The early proofs of Nielsen realisation for free groups rely in a
fundamental way on a result of
Karrass--Pietrowski--Solitar~\cite{karrassetal1973}, which states that
every finitely generated virtually free group acts on a tree with
finite edge and vertex stabilisers. In the language of Bass--Serre
theory, it amounts to saying that such a virtually free group is a
fundamental group of a finite graph of groups with finite edge and vertex
groups.

This result of Karrass--Pietrowski--Solitar in turn relies on the
celebrated theorem of Stallings on groups with at least two
ends~\cite{Stallings1968, Stallings1971}. Stallings' theorem states that any
finitely generated group with at least two ends splits over a finite
group, which means that it acts on a tree with a single edge orbit and finite
edge stabilisers. Equivalently: it is a fundamental group of a
graph of groups with a single edge and a finite edge group.

\smallskip
In the first part of this article, we generalise these
results to the setting of a free product
\[ A = A_1 \ast \dots \ast A_n \ast B \]
in which we (usually) require the factors $A_i$ to be finitely generated, and $B$ to be a finitely generated free group.
Consider any finite group $H$ acting on $A$ by outer automorphisms in a way preserving the given free-product decomposition, by which we mean that  each element of $H$ sends each subgroup $A_i$ to some $A_j$ (up to conjugation); note that we do not require the action of $H$ to preserve $B$ in any way.
%For groups with trivial centre (like $A$), an action by outer automorphisms gives an extension of the group; in our case this will be a finite extension
We then obtain a corresponding group extension
\[ 1\to A \to \overline A \to H \to 1\]
In this setting we prove (for formal
statements, see the appropriate sections)
\begin{description}
\item[Relative Stallings' theorem (\cref{prop: relative splitting})]
$\overline A$ splits over
a finite group, in such a way that each $A_i$ fixes a vertex in the
associated action on a tree.
\item[Relative Karrass--Pietrowski--Solitar
  theorem (\cref{KPS})] $\overline A$ acts on a
tree with finite edge stabilisers, and with each $A_i$ fixing a vertex
of the tree, and with, informally speaking, all other vertex groups
finite.
\item[Relative Nielsen realisation theorem (Theorem~\ref{rel NR})]
Suppose\\ that we are given complete non-positively
curved (i.e. locally CAT(0)) spaces $X_i$ realising the induced
actions of $H$ on the factors $A_i$. Then the action of
$H$ can be realised by a complete non-positively curved
space $X$; in fact $X$ can be chosen to contain the $X_i$
in an equivariant manner.
\end{description}
We emphasise that such a relative Nielsen realisation is new even if
all $A_i$ are free groups, in which case it answers a question of
Karen Vogtmann.

\smallskip
The classical Nielsen realisation for graphs  immediately implies
that a finite subgroup $H<\mathrm{Out}(F_n)$ fixes points in the Culler--Vogtmann Outer Space
(defined in~\cite{cullervogtmann1986}), as well as in the complex of
free splittings of $F_n$ (which is a simplicial closure of Outer
Space).
As an application of the work in this article, we similarly obtain fixed
point statements (\cref{fixed points,fixed point GL}) for the graph of relative free
splittings defined by Handel and Mosher~\cite{HandelMosher2014}, and the outer space of a free product defined by Guirardel and Levitt~\cite{GuirardelLevitt2007a}.

\smallskip
In the last section of the paper we prove
\begin{mainthm}[\ref{main: NR for limit groups}]
Let $A$ be a limit group, and let
\[
 A \to \overline A \to H
\]
be an extension of $A$ by a finite group $H$. Then there exists a complete compact locally CAT($\kappa$) space $X$ realising the extension $\overline A$, where $\kappa = -1$ when $A$ is hyperbolic, and $\kappa=0$ otherwise. When $\kappa=-1$, the space $X$ is of dimension at most $2$.
\end{mainthm}

This theorem is obtained by combining the classical Nielsen realisation theorems (for free, free-abelian and surface groups -- see \cref{NR for free groups,NR for abelian groups,NR}) with the existence of an invariant JSJ decomposition shown by Bumagin--Kharlampovich--Myasnikov~\cite{Bumaginetal2007}.

Note that, in general, having a graph of groups decomposition for a group $G$ with $\mathrm{CAT}(0)$ vertex groups and virtually cyclic edge groups does not allow one to build a $\mathrm{CAT}(0)$ space for $G$ to act on, and thus conclude that $G$ is itself $\mathrm{CAT}(0)$; the JSJ decompositions of limit groups are however special in this respect, and the extra structure allows for the conclusion. This has been observed by Sam Brown in \cite{Brown2016}, where he developed techniques for building up a $\mathrm{CAT}(0)$ space for $G$ to act on.

Observe that we obtain optimal curvature bounds for our space $X$ -- it has been proved by Alibegovi\'c--Bestvina~\cite{AlibegovicBestvina2006} that limit groups are CAT(0), and by Sam Brown~\cite{Brown2016} that a limit group is CAT($-1$) \iff it is hyperbolic.

Also, taking $H$ to be the trivial group gives a new (more direct) proof of the fact that limit groups are $\mathrm{CAT}(0)$.

\smallskip
Throughout the paper, we are going to make liberal use of the standard terminology of graphs of groups. The reader may find all the necessary information in Serre's book~\cite{serre2003}.
We are also going to make use of standard facts about
$\mathrm{CAT}(0)$ and non-positively curved (NPC) spaces, as well as more general $\mathrm{CAT}(\kappa)$ spaces; the standard
reference here is the book by
Bridson--Haefliger~\cite{bridsonhaefliger1999}.

\bigskip
\textbf{Acknowledgements.} The authors would like to thank Karen
Vogtmann for discussions and suggesting the statement of relative Nielsen realisation for free groups, Stefan Witzel for pointing out the work of Sam Brown, and the referee for extremely valuable comments.

\section{Relative Stallings' theorem}
\label{sec: rel ST}

In this section we will prove the relative version of Stallings' theorem.
Before we can begin with the proof, we need a number of definitions to formalise
the notion of a free splitting that is preserved by a finite group action.

\smallskip

\begin{convention}
When talking about free factor decompositions
\[A = A_1 \ast \dots \ast A_n \ast B\] of some group $A$, we will always assume
that at least two of the factors $\{ A_1, \dots, A_n, B \}$ are non-trivial.
\end{convention}

\begin{dfn}
\label{preserved}
Suppose that $\phi \colon H \to \Out(A)$ is a homomorphism with a finite
domain. Let $A = A_1 \ast \dots \ast A_n \ast B$ be a free factor
decomposition of $A$.  We say that this decomposition is
\emph{preserved by $H$} \iff for every $i$ and every $h \in H$, there is
some $j$ such that $h(A_i)$ is conjugate to $A_j$.

We say that a factor $A_i$ is \emph{minimal} \iff for any $h \in H$ the fact that $h(A_i)$ is conjugate to $A_j$ implies that $j \geqslant i$.
\end{dfn}

\begin{rmk}
 Note that when the decomposition is preserved, we obtain an induced action $H \to \mathrm{Sym}(n)$ on the indices $1, \dots, n$. We may thus speak of the stabilisers $\Stab_H(i)$ inside $H$.
 Furthermore, we obtain an induced action
 \[\Stab_H(i) \to \Out(A_i)\]

 The minimality of factors is merely a way of choosing a representative of each $H$ orbit in the action $H \to \mathrm{Sym}(n)$.
\end{rmk}

\begin{rmk}
  Given an action $\phi \colon H \to \Out(A)$, with $\phi$ injective
  and $A$ with trivial centre, we can define $\overline A \leqslant
  \Aut(A)$ to be the preimage of $H= \im \, \phi$ under the natural
  map $\Aut(A) \to \Out(A)$.
  We then note that $\overline A$ is an
  extension of $A$ by $H$:
  \[ 1 \to A \to \overline A \to H \to 1 \]
  and the left action of $H$ by outer
  automorphisms agrees with the left conjugation action inside the
  extension $\overline A$.

  Observe that then for each $i$ we also obtain an extension
  \[ 1 \to A_i \to \overline {A_i} \to \Stab_H(i) \to 1 \]
  where $\overline {A_i}$ is
  the normaliser of $A_i$ in $\overline A$.
% the subgroup of $\overline A$ generated by $A_i$ and a set of elements
%  in $\overline A$ which bijectively surject to $\Stab_H(i)$ and are contained
%  in the normaliser of $A_i < A$. Note that as the normaliser of a free factor
%  in a nontrivial free product is that free factor, the subgroup $\overline A_i$
%  does not depend on the choices.

  We emphasise that this construction works even when $A_i$
  itself is not centre-free. In this case it carries more
  information than the induced action $\Stab_H(i) \to \Out(A_i)$
  (e.g. consider the case of $A_i=\mathbb{Z}$ -- there are many different
  extensions corresponding to the same map to $\Out(\mathbb{Z})$).
\end{rmk}

%Stallings' original theorem \cite{Stallings1968, Stallings1971} states
%that a finitely generated group with at least two ends \emph{splits
%  over a finite group}, that is it acts on a tree without a global
%fixed point, and with a single edge orbit and finite edge stabilisers;
%equivalently, it is a fundamental group of a non-trivial graph of
%groups with a single edge and a finite edge group.

%The rest of this section is devoted to proving the relative Stallings' theorem.
%Hence, we consider a finite extension $\overline A$ of a
%free product $A$ by a finite group $H$ so that the associated action
%of $H$ on (the conjugacy classes in) the free product $A$ will
%preserve some decomposition of $A$ in the sense of
%\cref{preserved}.
%
%Then the splitting of $\overline A$ we will obtain
%will also preserve the decomposition of $A$, meaning that each factor
%of the decomposition will fix a vertex in the action on a tree.

We will now begin the proof of the relative version of Stallings' theorem.
It will use ideas from both Dunwoody's proof~\cite{Dunwoody1982} and
Kr\"on's proof~\cite{Kroen2010}\footnote{We warn the reader that later parts of
  Kr\"on's paper are not entirely correct; we only rely on the early, correct sections.} of Stallings'
theorem, which we now recall.

\begin{convention}
  If $E$ is a set of edges in a graph $\Theta$, we write $\Theta - E$ to mean
  the graph obtained from $\Theta$ by removing the interiors of edges in $E$.
\end{convention}

\begin{dfn}
  Let $\Theta$ be a graph. A finite subset $E$ of the edge set of
  $\Theta$ is called a set of \emph{cutting edges} \iff $\Theta - E$
  is disconnected and has at least two infinite components.

  A \emph{cut} $C$ is the union of all vertices contained in an
  infinite connected complementary component of some set of cutting
  edges. The \emph{boundary} of $C$ consists of all edges with exactly
  one endpoint in $C$.

  Given two cuts $C$ and $D$, we call them \emph{nested} \iff $C$ or
  its complement $C^*$ is contained in $D$ or its complement $D^*$. Note that $C^\ast$ and $D^\ast$ do not need to be cuts.
\end{dfn}

We first aim to show the following theorem which is implicit in
\cite{Kroen2010}.
\begin{thm}[\cite{Kroen2010}]
\label{thm: kroen}
Suppose that $\Theta$ is a connected graph on which a group $G$ acts.
Let $\mathcal P$ be a subset of the edge set of $\Theta$,
which is stable under the $G$-action.  If
there exists a set of cutting edges lying in $\mathcal P$, then there exists a
cut $C$ whose boundary lies in $\mathcal P$, such that $C^\ast$ is also a cut, and such that
furthermore for any $g \in G$ the cuts $C$ and $g.C$ are
nested.
\end{thm}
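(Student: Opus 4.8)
The plan is to run the standard Dunwoody--Kr\"on machinery, extracting a single cut by a minimality argument and then verifying nestedness and the ``$C^\ast$ is a cut'' condition by hand. First I would fix, among all sets of cutting edges having property $\mathcal P$, one whose cardinality $n$ is minimal; call it $E_0$, and let $C$ be a cut associated to $E_0$, i.e.\ the vertex set of one infinite component of $\Theta - E_0$. Since $\mathcal P$ is stable under taking subsets, we may assume that the boundary $\partial C$ consists only of edges of $E_0$ actually separating that infinite component from the rest, and in particular $|\partial C| \leqslant n$; stability of $\mathcal P$ under subsets then gives $\partial C \in \mathcal P$, and by minimality $|\partial C| = n$. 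The key point of the minimality choice is a standard ``no thin corners'' lemma: if $C$ and $D$ are both cuts with boundaries in $\mathcal P$ and boundary size exactly $n$, then one of the four ``corner'' sets $C \cap D$, $C \cap D^\ast$, $C^\ast \cap D$, $C^\ast \cap D^\ast$ has a boundary contained in $\partial C \cup \partial D$ of size strictly less than $n$ unless that corner is finite; but such a boundary is in $\mathcal P$ (stable under unions and subsets), so by minimality any corner with boundary $< n$ must fail to be a cut, i.e.\ it has no infinite component other than what is forced, and one deduces $C$ and $D$ are nested. Applying this with $D = g.C$ (whose boundary $g.\partial C$ is in $\mathcal P$ by $G$-stability and has size $n$) yields nestedness of $C$ and $g.C$ for all $g \in G$.

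The remaining, slightly more delicate, point is to arrange that $C^\ast$ is also a cut, i.e.\ that the complementary vertex set is itself the vertex set of an infinite component of (the removal of) some finite set of cutting edges. Here I would use the hypothesis built into the definition of a set of cutting edges, namely that $\Theta - E_0$ has \emph{at least two} infinite components. If $C$ was chosen as one infinite component and there are at least two, then $C^\ast$ contains another infinite component, but $C^\ast$ itself need not be connected. To fix this, I would replace $E_0$ by the subset $E_1 \subseteq E_0$ consisting exactly of the edges separating the union of all infinite components ``on the $C$ side'' from everything else --- more precisely, after the nestedness analysis one can pass to a cut that is ``minimal'' not only in boundary size but also chosen so that both it and its complement are unions of infinitely many-- or rather, so that removing $\partial C$ leaves precisely two infinite components, one with vertex set $C$ and one with vertex set $C^\ast$. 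Concretely: among all cuts with boundary in $\mathcal P$ of size $n$, pick one for which $C$ is \emph{inclusion-minimal}; then $\partial C$ is a set of cutting edges (it disconnects, and both $C$ and $C^\ast \cap (\text{infinite part})$ are infinite), and minimality of $C$ forces $\Theta - \partial C$ to have exactly the component with vertex set $C$ on one side, so $C^\ast$ is connected and infinite, hence a cut.

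The main obstacle I expect is precisely this last bookkeeping: ensuring simultaneously that (a) the boundary size stays at the minimum $n$, (b) property $\mathcal P$ is retained (this is automatic from the subset/union stability once the boundary is contained in $\partial E_0$), and (c) both $C$ and $C^\ast$ are genuinely cuts and not merely ``one infinite component plus finite junk''. This is the place where Kr\"on's original argument is, per the paper's footnote, not entirely correct, so I would be careful to phrase the double minimisation (first minimise $|\partial C|$ over all cutting-edge sets with $\mathcal P$, then among the resulting cuts minimise $C$ under inclusion) explicitly, and to check that the ``no thin corners'' lemma still applies to these doubly-minimal cuts --- in particular that the corners $C \cap g.C$ etc.\ being cuts with boundary in $\mathcal P$ and size $\leqslant n$ forces them, by the first minimisation, to have size exactly $n$ or to be non-cuts, and then that inclusion-minimality rules out the problematic case, yielding $C \subseteq g.C$ or $C \supseteq g.C$ or $C \subseteq g.C^\ast$ or $g.C \subseteq C^\ast$, which is exactly nestedness. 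The finiteness-of-edges hypothesis (cutting-edge sets are finite) is what makes all these boundaries finite and the counting arguments go through.
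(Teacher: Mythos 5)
There is a genuine gap at the heart of your argument: the ``no thin corners'' lemma, as you state it, is false, and it is precisely the point where the real work of the Dunwoody--Kr\"on argument happens. If $C$ and $D$ are two cuts whose boundaries have $\mathcal P$ and have the minimal size $n$, the submodularity inequality only gives
$|\partial(C\cap D)|+|\partial(C^\ast\cap D^\ast)|\leq 2n$ and $|\partial(C\cap D^\ast)|+|\partial(C^\ast\cap D)|\leq 2n$; if all four corners happen to be cuts, each boundary is then forced to equal exactly $n$, and no contradiction arises --- the two narrow cuts simply cross. Minimality of the boundary size does \emph{not} imply that two narrow cuts are nested, and crossing narrow cuts genuinely occur. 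This is why the proof cited in the paper needs two further ingredients that are absent from your proposal: first, the finiteness lemma (Dunwoody's \cite[2.5]{Dunwoody1982}, Kr\"on's \cite[Lemma 2.1]{Kroen2010}) that any given edge lies in the boundary of only finitely many $\mathcal P$-narrow cuts, which makes the crossing number $m(C)$ (the number of $\mathcal P$-narrow cuts not nested with $C$) finite; and second, the minimisation of $m(C)$ over all $\mathcal P$-narrow cuts, together with the argument that \emph{optimally nested} cuts are pairwise nested. Your argument never introduces either of these.

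Your fallback --- refining the choice by taking $C$ inclusion-minimal among narrow cuts --- does not repair this, because such a minimal element need not exist: narrow cuts can form infinite strictly descending chains (already in the Cayley graph of $\mathbb Z$ the half-lines $[k,\infty)$ for $k=1,2,\dots$ are all narrow and nested in a descending chain). By contrast, the part of your write-up concerning $C^\ast$ being a cut is essentially fine and matches the paper's (simpler) observation: for a $\mathcal P$-narrow cut, if $C^\ast$ failed to be a cut one could delete some boundary edges and obtain a $\mathcal P$-cut with strictly smaller boundary, contradicting narrowness. So the first and last steps of your outline are sound, but the nestedness step needs to be replaced by the finiteness-plus-optimal-nesting argument.
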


\begin{proof}[Sketch of proof]
  In order to prove this, we recall the following terminology, roughly
  following Dunwoody. We say that $C$ is a $\mathcal{P}$-cut \iff its
  boundary lies in $\mathcal{P}$. Say that a $\mathcal{P}$-cut is
  \emph{$\mathcal{P}$-narrow}, \iff its boundary contains the minimal
  number of elements among all $\mathcal{P}$-cuts.  Note that for each
  $\mathcal{P}$-narrow cut $C$, the complement $C^\ast$ is also a cut,
  as otherwise we could remove some edges from the boundary of $C$ and
  get another $\mathcal{P}$-cut.

Given any edge $e \in \mathcal{P}$, there are finitely many
$\mathcal{P}$-narrow cuts which contain $e$ in its boundary.
This is shown by Dunwoody \cite[2.5]{Dunwoody1982} for narrow cuts,
and the proof carries over to the $\mathcal{P}$-narrow
case. Alternatively, Kr\"on \cite[Lemma 2.1]{Kroen2010} shows this for sets of
cutting edges which cut the graph into exactly two connected
components, and $\mathcal{P}$-narrow cuts have this property.

Now, consider for each $\mathcal{P}$-narrow cut $C$ the number $m(C)$
of $\mathcal{P}$-narrow cuts which are not nested with $C$ (this is
finite by~\cite[2.6]{Dunwoody1982}). Call a $\mathcal{P}$-narrow cut
\emph{optimally nested} if $m(C)$ is smallest amongst all
$\mathcal{P}$-narrow cuts. The proof of Theorem~3.3 of
\cite{Kroen2010} now shows that optimally nested $\mathcal{P}$-cuts
are all nested with each other.
%\footnote{There is a minor issue with
%  the presentation of Kr\"on's proof in his paper. It involves
%  intersections of cuts, which by Kr\"on's definition need not be cuts
%  (he assumes that the a cut and its complement is connected) -- this
%  does not actually pose a major problem, and does not appear when our
%  definition of a cut is
%  used.}. %(here we use the fact that finite unions of boundaries with $\mathcal P$ also have $\mathcal P$).
This shows Theorem~\ref{thm: kroen}.
\end{proof}

To use that theorem, recall

\begin{thm}[{\cite[Theorem 4.1]{Dunwoody1982}}]
\label{thm: dunwoody}
Let $G$ be a group acting on a graph $\Theta$.
Suppose that there exists a cut $C$, such that
\begin{enumerate}
 \item $C^\ast$ is also a cut; and
 \item there exists $g \in G$ such that $g.C$ is properly contained in $C$ or $C^\ast$; and
 \item $C$ and $h.C$ are
nested for any $h \in G$.
\end{enumerate}
Let $E$ be the boundary of $C$.
Then $G$ splits over the stabiliser of $E$
%(which is a finite group)
, and the stabiliser of any component of
$\Theta - G. E$ is contained in a conjugate of a vertex group.
\end{thm}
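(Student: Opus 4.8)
The plan is to recall Dunwoody's structure-tree construction (this is \cite[Theorem 4.1]{Dunwoody1982}), which produces the splitting directly from the $G$-orbit of the cut $C$. First I would form the family $\mathcal{E}$ of all translates $g.C$ and $g.C^\ast$ with $g \in G$. By hypothesis~(1) every member of $\mathcal{E}$ is a cut; the family is $G$-invariant by construction; and by hypothesis~(3), together with the elementary facts that complements of nested cuts are nested and that $g.D$ is nested with $g.D'$ whenever $D$ is nested with $D'$, any two members of $\mathcal{E}$ are pairwise nested. Ordered by inclusion and closed under the involution $D \mapsto D^\ast$, this is a $G$-invariant nested set in Dunwoody's sense.

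Next I would build the structure tree $T = T(\mathcal{E})$: its edges are the pairs $\{D, D^\ast\}$, $D \in \mathcal{E}$, and its vertices are the consistent orientations of these pairs. The crucial point is that $T$ is genuinely a tree. The absence of cycles is forced by the order structure on a nested set; connectedness is the substantive claim and amounts to showing that for $D \subsetneq D'$ in $\mathcal{E}$ only finitely many members of $\mathcal{E}$ lie strictly between them. Here one uses that in our application $C$ is the narrow cut produced by \cref{thm: kroen}, and invokes Dunwoody's bound \cite[2.5]{Dunwoody1982} on the number of narrow cuts containing a fixed boundary edge — exactly the input already recalled in the proof of \cref{thm: kroen}. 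Since $\mathcal{E}$ consists precisely of the two halves of the edges in the single $G$-orbit of $\{C, C^\ast\}$, the group $G$ acts on $T$ with one orbit of edges, so the associated graph-of-groups decomposition of $G$ has a single edge (after subdividing, should some $g \in G$ send $C$ to $C^\ast$).

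It then remains to read off the cell stabilisers and to check non-triviality. The stabiliser of the edge $\{C, C^\ast\}$ is $\{g \in G : g.C \in \{C, C^\ast\}\}$; since $\partial C = \partial C^\ast = E$ this lies in $\Stab_G(E)$, and conversely, if $g.E = E$ then $g.C$ is a cut with boundary $E$ nested with $C$, and a short connectivity argument (using that $g.C$ and $g.C^\ast$ are precisely the two components of $\Theta - g.E$) forces $g.C \in \{C, C^\ast\}$; hence the edge group is exactly $\Stab_G(E)$. In the settings where the theorem is applied, $\Theta$ is locally finite and $G$ acts on it with finite (indeed, usually trivial) edge-stabilisers, so the finiteness of $E$ makes $\Stab_G(E)$ finite. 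For the vertex groups: each component $K$ of $\Theta - G.E$ lies, for every $g$, entirely inside $g.C$ or $g.C^\ast$, and this consistent choice of sides is a vertex $v_K$ of $T$; hence $\Stab_G(K)$ fixes $v_K$ and so embeds in the corresponding vertex group, up to conjugacy. Finally, hypothesis~(2) supplies $g$ with $g.C \subsetneq C$, whence $C \supsetneq g.C \supsetneq g^2.C \supsetneq \cdots$, so the edge $\{C, C^\ast\}$ has infinite $G$-orbit and $G$ fixes no point of $T$ — the splitting is genuinely non-trivial.

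I expect the only real obstacle to be the connectedness of $T$, i.e. the interval-finiteness of the nested family $\mathcal{E}$; this is the one place where Dunwoody's combinatorial analysis of narrow cuts is needed, and everything else is routine bookkeeping with nested families and their boundaries.
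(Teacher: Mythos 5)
The paper does not actually prove this statement: it is quoted as \cite[Theorem 4.1]{Dunwoody1982} and used as a black box, so there is no internal argument to compare yours against. Your sketch is a faithful outline of Dunwoody's structure-tree proof (nested $G$-invariant family $\mathcal{E}$, tree of orientations, one edge orbit, stabilisers read off from sides), and the overall logic is sound.

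Two points deserve a little more care. First, your non-triviality argument via the chain $C \supsetneq g.C \supsetneq g^2.C \supsetneq \cdots$ only works verbatim in the case $g.C \subsetneq C$; if instead $g.C \subsetneq C^\ast$, the iteration need not continue (for instance $g$ may have order two, as for a reflection in $\Z/2 \ast \Z/2$), so one should argue separately that hypothesis (2) forbids a global fixed vertex of $T$ — e.g.\ a fixed vertex together with a single edge orbit would force all translates of $\{C,C^\ast\}$ to share that vertex, which is incompatible with a proper containment among translates. Second, neither the interval-finiteness of $\mathcal{E}$ (i.e.\ connectedness of $T$) nor the finiteness of $\Stab_G(E)$ follows from hypotheses (1)--(3) alone; both come from the ambient setup in which the theorem is invoked ($C$ is a $\mathcal{P}$-narrow cut produced by \cref{thm: kroen}, and $\overline A$ acts freely and cocompactly on the locally finite graph $\Theta$, so the setwise stabiliser of the finite set $E$ is finite). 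You acknowledge both dependencies, which is consistent with how the paper uses the result, but in a self-contained write-up these would have to be added as explicit standing assumptions on $\Theta$ and the $G$-action.
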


Now we are ready for our main splitting result.

\begin{figure}
\begin{center}
\includegraphics[scale=2]{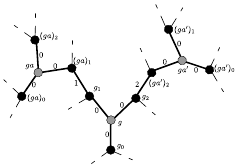}
\end{center}
\caption{A local picture of the graph $\Theta$.}
\label{Cayley graph}
\end{figure}

\begin{thm}[Relative Stallings' Theorem]
\label{prop: relative splitting}
Let $\phi \colon H \to \Out(A)$ be a monomorphism with a finite domain.
Let $A = A_1 \ast \dots \ast A_n \ast B$ be a free product decomposition with each $A_i$ and $B$ finitely generated, and suppose that it is preserved by $H$. Let $\overline A$ be the preimage of $H = \im \, \phi$ in $\Aut(A)$.
Then $\overline A$ acts on a tree with finite quotient so that
%splits over a finite group in such a way that
each $A_i$ fixes a vertex, and no non-trivial subgroup of $A$ fixes any edge.
\end{thm}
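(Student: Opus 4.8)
The plan is to construct a connected graph $\Theta$ on which $\overline A$ acts, so that we can feed it into \cref{thm: kroen} and \cref{thm: dunwoody}, with the property $\mathcal P$ chosen to keep track of the relative structure (the subgroups $A_i$). A natural candidate for $\Theta$ is a Cayley-type graph: take the Bass--Serre tree $T$ of the free-product splitting $A = A_1 \ast \dots \ast A_n \ast B$, on which $A$ acts with vertex stabilisers the conjugates of the $A_i$ (and trivial stabilisers coming from $B$), and build from it a graph on which the whole extension $\overline A$ acts. Concretely, since $\overline A$ contains $A$ as a finite-index normal subgroup, one can form a graph with vertex set (a suitable quotient of) $\overline A$ and edges recording both the tree-adjacency coming from $T$ and the finite ``error'' coming from the coset representatives of $H$; the point is that $\overline A$ is virtually the free product, hence has more than one end, so \emph{some} finite set of cutting edges exists, and moreover one can arrange the graph so that each $A_i$ acts with bounded orbits (fixes a vertex of $T$, hence lies in a bounded region of $\Theta$).

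The key steps, in order, are as follows. First I would fix the graph $\Theta$ precisely, using the action of $\overline A$ on the (geometric realisation of the) Bass--Serre tree $T$ of the given splitting: $\overline A$ acts on $T$ because the splitting is preserved by $H$, with the subgroups $A_i$ fixing vertices. However $T$ itself has edge stabilisers that need not be finite, so instead I would take a Cayley graph of $\overline A$ with respect to a finite generating set chosen to include generators of each $A_i$ together with coset representatives of $H$ — this makes $\Theta$ connected, locally finite, with free $\overline A$-action, and with each $A_i$ contained in a single connected bounded ``blob''. Second, I would define the property $\mathcal P$: a finite edge set $E$ has $\mathcal P$ if and only if $\Theta - E$ has a complementary component whose vertex set, pulled back to $\overline A$, is disjoint from (every $\overline A$-translate of) the ``blobs'' corresponding to the $A_i$ — in other words, the cut separates off an infinite region not meeting any conjugate of any $A_i$. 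One must check $\mathcal P$ is $\overline A$-invariant and closed under subsets and unions (union is the delicate one, as usual in Dunwoody's argument, but the formulation ``misses all the blobs'' is visibly closed under both operations), and, crucially, that a set of cutting edges with $\mathcal P$ exists. Third, given such a $\mathcal P$-cutting-edge-set, apply \cref{thm: kroen} to get a $\mathcal P$-cut $C$ with $C^\ast$ a cut and $C, g.C$ nested for all $g \in \overline A$; then verify the hypotheses of \cref{thm: dunwoody} — in particular condition (2), that some translate $g.C$ is properly contained in $C$ or $C^\ast$, which follows because $\overline A$ does not fix an end (it has infinitely many ends or acts nontrivially enough) — to conclude that $\overline A$ splits over the finite group $\Stab(E)$, with each component stabiliser inside a vertex group. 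Finally, I would argue that because $E$ has $\mathcal P$, the component of $\Theta - \overline A.E$ containing a given blob $A_i \cdot v$ is $A_i$-invariant, so $A_i$ is contained in the corresponding vertex group, i.e. $A_i$ fixes a vertex of the Bass--Serre tree of this new splitting.

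The main obstacle I expect is the \emph{existence} of a set of cutting edges with property $\mathcal P$ — that is, showing $\overline A$ admits a finite cut separating off an infinite region disjoint from all $\overline A$-translates of the blobs. If all $A_i$ are finite (or $n = 0$) this is just Stallings' theorem applied to the virtually free group $\overline A$. In general one must use that $B$ is a nontrivial free group (or that there are at least two nontrivial free factors), so that the Bass--Serre tree $T$ has an edge, and hence an end of $T$, lying ``beyond'' all the blobs along a direction in $B$; transporting this to $\Theta$ via the quasi-isometry $\overline A \simeq T$ coarsely, one gets the required infinite $\mathcal P$-region, and then a finite cutting edge set by taking a large enough ball boundary. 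A secondary technical point, also present in Dunwoody's and Kr\"on's arguments, is checking that $\mathcal P$-narrowness interacts correctly with our $\mathcal P$ — in particular that a $\mathcal P$-narrow cut $C$ has $C^\ast$ a cut — but with the ``misses all blobs'' formulation this is a direct minimality argument as in the sketch of \cref{thm: kroen}. I would also need to double-check that the finite edge stabiliser coming out of \cref{thm: dunwoody} is genuinely finite here, which is automatic since $\overline A$ acts freely on $\Theta$ and $E$ is a finite edge set.
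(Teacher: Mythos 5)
Your overall strategy coincides with the paper's: build a Cayley-type graph for $\overline A$ whose generating set sees the factors $A_i$, produce an equivariantly nested cut avoiding the ``$A_i$-parts'' via \cref{thm: kroen}, and split via \cref{thm: dunwoody}. There are, however, two genuine gaps. The first is your property $\mathcal P$ (``some infinite complementary component of $\Theta - E$ misses every translate of every blob''): it does not satisfy the hypotheses of \cref{thm: kroen}, since it is not closed under passing to subsets (the empty edge set never has it, as $\Theta$ itself meets the blobs, yet it is a subset of every edge set), being a property of the complement rather than of the edges themselves. It also would not deliver your last step: even if some component of $\Theta - \overline A.E$ misses all blobs, $E$ could still contain edges interior to some blob and disconnect it, so $A_i$ need not lie in a single vertex group. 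The property you actually need---and the one the paper uses---is that \emph{no edge of $E$ is internal to a blob} (in the paper's labelled graph: every edge of $E$ carries the label $0$); this is visibly closed under subsets, unions and the $\overline A$-action, and it forces each connected blob into a single complementary component, which is exactly what makes $A_i$ elliptic. Note also that the blobs are not bounded: $A_i$ is in general infinite, so its orbit in the Cayley graph of $\overline A$ is an infinite connected subgraph; only its image in the Bass--Serre tree $T$ is a point.

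The second, more serious, gap is the existence of a set of cutting edges with this property, which is the technical heart of the proof. Your sketch relies on transporting an end of $T$ ``beyond the blobs'' to $\Theta$ via a quasi-isometry $\overline A \simeq T$, but no such quasi-isometry exists when the $A_i$ are infinite (collapsing the blobs is a quasi-isometry only for finite factors). The paper instead constructs, for each $i$, a quasi-isometric retraction $\rho_i$ of the graph for $\overline A$ onto the corresponding graph for $A$, sending edges of label $i$ to paths whose edges all carry a single non-zero label; a hypothetical path joining the two sides of the candidate cut while avoiding the label-$0$ edges of a large ball is decomposed into subpaths according to which label it uses near the centre, each subpath is pushed down by the appropriate $\rho_k$, and the concatenated image yields a contradiction. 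Some argument of this kind---controlling which edges a connecting path can use deep inside the ball, factor by factor---is unavoidable and is missing from your proposal.
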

Note in particular that the quotient of the associated tree by $\overline A$ has a single edge.
\begin{proof}
  Before we begin the proof in earnest, we will give a brief outline
  of the strategy. First, we will define a variant of the Cayley graph
  for $\overline{A}$ in which the free product structure of $A$ will
  be visible (in fact, a subgraph will collapse to the Bass-Serre tree
  of the free product decomposition of $A$). This graph will contain
  the different copies if $A_i$ disjointly, separated by edges
  labelled with a certain label. We will then aim to show that there
  is a set of cutting edges just using edges with that label -- which,
  using Theorem~\ref{thm: dunwoody}, will yield the desired action on
  a tree.

\smallskip
Let $\mathcal A_i$ and $\mathcal B$ be finite generating sets of $A_i$ and $B$, respectively (for all $i\leqslant n$). We also choose a finite set $\mathcal H \subset \overline A$ which maps onto $H$ under the natural epimorphism $\overline A \to H$. Note that $\bigcup_i \mathcal A_i \cup \mathcal B \cup \mathcal H$ is a  generating set of $\overline A$.

We define $\Theta$ to be a variation of the (right) Cayley graph of
$\overline A$ with respect to the generating set $\bigcup_i
\mathcal A_i \cup \mathcal B \cup \mathcal H$. Intuitively, every vertex of the Cayley
graph will be ``blown up'' to a finite tree (see \cref{Cayley graph}). More formally,
the vertex set of $\Theta$ is
\[ V(\Theta) = \overline{A} \sqcup  (\overline{A}\times\{0,\ldots,n\}) \]
%\[ V(\Theta) = \{ g, g_0, g_1, \dots, g_n \mid g \in \overline A \} \]
We adopt the notation that a vertex corresponding to an element in $\overline{A}$
will simply be denoted by $g$, whereas a vertex $(g,i)$ in the second factor will be
denoted by $g_i$.

We now define the edge set, together with a labelling of the edges by
integers $0, 1, \dots, n$, as follows:
\begin{itemize}
\item for each $g \in \overline A$ and each $i \in \{0, \dots, n\}$ we have an edge labelled by $0$ connecting $g$ to $g_i$;
\item for each $g \in \overline A$, each $i\geqslant 1$ and each $a \in \mathcal A_i$, we  have an edge labelled by $i$ from $g_i$ to $(ga)_i$;
\item for each $g \in \overline A$, and each $b \in \mathcal B \cup \mathcal H$, we  have an edge labelled by $0$ from $g_0$ to $(gb)_0$.
\end{itemize}
The group $\overline A$ acts on $\Theta$ on the left, preserving the
labels. The action is free and co-compact. The graph $\Theta$
retracts via a quasi-isometry onto a usual Cayley graph of $\overline
A$ by collapsing edges connecting $g$ to $g_i$. Also note that there are copies of the Cayley graphs of the $A_i$ with respect to the generating set $\mathcal{A}_i$ in $\Theta$, where each edge has the label $i$.

Let $\Omega$ denote a graph constructed in the same way for the group
$A$ with respect to the generating set $\bigcup \mathcal A_i \cup
\mathcal B$. There is a natural embedding of $\Omega$ into $\Theta$,
and hence we will consider $\Omega$ as a subgraph of $\Theta$. Note
that this embedding is also a quasi-isometry.

\medskip
%Crucially, $\Theta$ retracts onto $\Omega$ via quasi-isometries. It
%does so in many ways, and w
We will now construct $n$ specific quasi-isometric retractions of $\Theta$ onto $\Omega$. These will be used later to modify paths in order to avoid edges with certain labels.

Let us fix $i \in \{ 1, \dots, n \}$. For each $h \in H$ we pick a representative $h_i \in \overline A$ thereof, such that $h_i A_i {h_i}^{-1} = A_j$ for a suitable (and unique) $j$; for $1 \in H$ we pick $1 \in \overline A$ as a representative. These elements $h_i$ are coset representatives of the normal subgroup $A$ of $\overline A$.

Such a choice defines a retraction $\rho_i \colon \Theta \to \Omega$ in the following way: each vertex $g$ is mapped to the unique vertex $g'$ where $g' \in A$ and $g'h_i =g$ for some $h_i$; the vertex $g_k$ is then mapped to $(g')_k$. An edge labelled by $0$ connecting $g$ to $g_k$ is sent to the edge connecting $g'$ to $(g')_k$. The remaining edges with label $0$ are sent in an $A$-equivariant fashion to paths connecting the image of their endpoints; the lengths of such paths are uniformly bounded, since (up to the $A$-action) there are only finitely many edges with label $0$.

Similarly,
the edges of label $k \not\in \{ 0, i\}$ are mapped in an $A$-equivariant manner to paths connecting the images of their endpoints; again, their length is uniformly bounded. %This is true because for each $h_i$ and each $A_j$ there is an element $x \in A$ such that $h_i(A_j) = x A_{j'} x^{-1}$ for some $j'$, and hence we need to consider only finitely many such conjugating elements (one for each pair $(i,j)$); moreover, we have $\mathcal A_{j'} = A_{j'} \s- \{1 \}$ for every $j'$.

Each edge labelled by $i$ is sent $A$-equivariantly to a path connecting the images of its endpoints, such that the path contains edges labelled only by some $j$  (where $j$ is determined by the coset of $A$ the endpoints lie in); such a path exist by the choice of the representatives $h_i$.

Note that each such retraction $\rho_i$ is a $(\kappa_i, \kappa_i)$-quasi-isometry for some $\kappa_i \geqslant 1$; we set $\kappa = \max_i \kappa_i$.

\medskip

Now we are ready to construct a set of cutting edges in $\Theta$.

Consider the ball $B_\Omega(1,1)$ of radius 1 around the vertex 1 in $\Omega$ (all of whose edges are labelled by $0$). Since $A$ is a nontrivial free product, the identity element disconnects the Cayley graph into at least two infinite components. Hence,
$B_\Omega(1,1)$ disconnects $\Omega$ also into at least two infinite components; let us take two vertices of $\Omega$, $x$ and $y$, lying in distinct infinite components of $\Omega - B_\Omega(1,1)$, and such that
\[d_\Omega(1,x) = d_\Omega(1,y) \geqslant \kappa^2 + 4\]

Now let $E$ denote the set of all edges lying in the ball $B_\Theta(1,\kappa^2 + 4)$ labelled by $0$. We claim that $E$ disconnects $\Theta$ into at least two infinite components.
Note that $\Theta - E$ has finitely many components, since $E$ is finite. By possibly choosing $x,y$ even further from each other, it therefore suffices to show that
$E$ disconnects $x$ from $y$ (viewed as vertices of $\Theta$).

Suppose for a contradiction that there exists a path $\gamma$ in $\Theta - E$ connecting $x$ to $y$. Using any of the quasi-isometries $\rho_i$ we immediately see that $\gamma$ has to go through $B_\Theta(1,\kappa^2 + 4)$, since $\rho_i(\gamma)$ must intersect $B_\Omega(1,1)$.
Note that if $\gamma' \subset \gamma$ is a subpath lying completely in $B_\Theta(1,\kappa^2+4)$, then $\gamma'$ only traverses edges with the same label (as $\gamma$ does not intersect $E$). Thus, we can write $\gamma$ as a concatenation
\[ \gamma = \gamma_1 * \dots * \gamma_m \]
where each $\gamma_i$ intersects $B_\Theta(1,\kappa^2 + 4)$ only at edges of one label, and its endpoints lie outside of $B_\Theta(1,\kappa^2 + 4)$. We modify each $\gamma_i$ by pre- and post-concatenating it with a path of length at most $4$ (note that all the elements of $\mathcal{H}$ correspond to edges), so that it now starts and ends at $\Omega$. Still, the new path (which we will continue to call $\gamma_i$) intersects $B_\Theta(1, \kappa^2 + 1)$ only at edges labelled by a single label. %We will also continue to call the concatenation of these paths by $\gamma$.

Now we construct a new path $\gamma'$ as follows. Suppose that $k_i$ is such that
each edge in $\gamma_i \cap B_\Theta(1,\kappa^2+1)$ has label $k_i$. We put
\[ \gamma_i' = \rho_{k_i}(\gamma_i)\]
Note that as $\rho_{k_i}$ is a retraction onto $\Omega$, and the endpoints of $\gamma_i$ are in $\Omega$, the path $\gamma_i'$ has the same endpoints as $\gamma_i$. Put
\[ \gamma' = \gamma_1' * \dots * \gamma_m' \]
This is now a path joining $x$ to $y$ in $\Omega$, and thus contains an edge \[e \in B_\Omega(1,1)\]

There exists an edge $f$ in some $\gamma_i$, such that $e$ lies in the image of $f$ under the map $\rho_{k_i}$ that we applied to $\gamma_i$. Since each $\rho_k$ is an $(\kappa,\kappa)$-quasi-isometry, the edge $f$ lies within $B_\Theta(1,\kappa^2 + 1)$. But then $\rho_{k_i}(f)$ is a path the edges of whom are never labelled by $0$, and so in particular $e \not\in E$, a contradiction.

\smallskip
We now apply Theorem~\ref{thm: kroen}, taking $\mathcal P$ to be the set of edges labelled by $0$. Let $C$ denote the cut we obtain, and let $F$ denote its boundary.

To apply Theorem~\ref{thm: dunwoody} we need to only show that for some $g \in \overline A$ we have $g.C$ properly contained in $C$ or $C^\ast$.
Since $C^\ast$ is infinite, it contains an element $g \in \overline A$ such that $g.F \neq F$. Taking such a $g$, we see that either $g.C$ is properly contained in $C^\ast$ (in which case we are done), or $C$ is properly contained in $g.C$. In the latter case we have $g^{-1} .C \subset C$. We have thus verified all the hypotheses of Theorem~\ref{thm: dunwoody}.

Since the boundary $F$ of the final cut $C$ is labelled by $0$, upon removal of the open edges in $\overline A . F$, the connected component containing $1_i$ contains the entire subgroup $A_i$, since vertices corresponding to elements of this subgroup are connected to $1_i$ by paths labelled by $i$. Thus $A_i$ is a subgroup of a conjugate of a vertex group, and so it fixes a vertex in the associated action on a tree.

\smallskip It remains to show the
triviality of edge stabilisers in $A$. In fact we will show that no non-trivial subgroup
$G < A$ fixes a narrow cut in $\Theta$ with boundary consisting only of edges labelled by $0$. To
this end, let $C$ be such a cut, and $F$ the set of edges forming the
boundary of $C$.

We begin by considering the subgraph $\Omega$.
Let $\Gamma$ be an infinite component of $\Omega - F$, and $h\in H$ be
arbitrary. There are infinitely many vertices $v$ in $\Gamma$ such that
no edge emanating from $v$ lies in $F$ (as the latter is finite). Take
one such vertex, and consider an edge $e$ in its star which
corresponds to right multiplication with $h$. Since $h$ normalises $A$, it in fact connects
$\Omega$ to $h.\Omega$. On the other hand, there can be only a single component of
$h.\Omega - F$ which is connected to $\Gamma$ as the cut $C$ is narrow: otherwise
the components of $h.\Omega - F$ would lie in the same component of $\Theta-F$, and
$F$ would fail the definition of a boundary of a cut.

In summary, we have shown, that for each $h$, each infinite
component $\Gamma$ of $\Omega-F$ is connected (via an edge corresponding to right multiplication by
$h$) to a unique infinite component of $h.\Omega-F$. In other words,
infinite components of $\Omega-F$ and $h.\Omega-F$ are in bijection to
each other, where the bijection identifies components which are connected in $\Theta-F$.

Now, we can think of $\Omega$ as the Bass-Serre tree for the splitting
of $A$, whose vertices have been ``blown up'' to Cayley graphs of the
subgroups $A_i$. In particular, each edge labelled by $0$ disconnects
$\Omega$. This implies that $\Omega-F$, and hence each $h.\Omega - F$,
has exactly two components, both of which are infinite. Namely, if
$\Omega-F$ would have more than two infinite components, or just a single
one, the same would be true for $\Theta-F$, violating narrowness of
the cut $F$. It also implies that $F \cap h.\Omega$ consists of exactly one edge for each $h$.
Since $A$ acts freely on $\Omega$,
%the free product $A$ acts on its Bass-Serre tree with trivial edge stabilisers,
this
implies the final claim of the theorem.
\end{proof}

\section{Blow-ups}

We make the convention that graphs of groups are always connected unless explicitly stated otherwise.

\iffalse
%\begin{dfn}
% Let $T$ be a tree with a vertex $v$, and let $X$ be a connected metric space. We say that the metric space $Y$ is a \emph{blow-up} of $T$ at $v$ by $X$ \iff $X$ embeds into $Y$, and collapsing $X$ to a point yields a space isometric to $T$ in a way sending the image of $X$ to $v$.
%\end{dfn}
\begin{dfn}
 Let $T$ be a metric space, and $v\in T$ a point which admits a neighbourhood isometric
 to the neighbourhood of a vertex in a tree. Let $X$ be a connected metric space. We say that the metric space $Y$ is a \emph{blow-up} of $T$ at $v$ by $X$ \iff $X$ embeds into $Y$, and collapsing $X$ to a point yields an isometry onto $T$ which collapses $X$ onto $v$.
\end{dfn}
\fi

\begin{prop}[Blow-up with finite edge groups]
\label{blow up}
Let $G$ be a graph of groups with finite edge groups. For each vertex $v$ suppose that the associated vertex group $G_v$ acts on a connected space $X_v$ in such a way that
each finite subgroup of $G_v$ fixes a point of $X_v$.
Then there exists a connected space $Y$ on which $\pi_1(G)$ acts, satisfying the following:
\begin{enumerate}
\item there is a $\pi_1(G)$-equivariant map $\pi \colon Y \to \widetilde G$;
\item if $w$ is a vertex of $\widetilde G$ fixed by $G_v$, then $\pi^{-1}(w)$ is $G_v$-equivariantly isometric to $X_v$;
\item every finite subgroup of $G$ fixes a point of $Y$.
\end{enumerate}
Moreover, when the spaces $X_v$ are compact, complete and CAT(0) then $Y$ is a compact, complete CAT(0) space.
\end{prop}
\begin{proof}
Recall that the vertices of $\widetilde G$ are left cosets of the vertex groups $G_v$ of $G$; for each vertex $w$ we pick an element $z_w \in G$ to be a  coset representative of such a coset.

We will build the space $Y$ in two steps. First, we construct the preimage under $\pi$ of the vertices of $\widetilde G$, and call it $V$. We define $V$ to be the disjoint union of spaces $X_w$, where $w$ runs over the vertices of $\widetilde G$, and $X_w$ is an isometric copy of $X_v$, where $v$ is the image of $w$ under the quotient map $\widetilde G \to G$.
We construct $\pi \colon V \to \widetilde G$ by declaring $\pi(X_w) = \{w\}$.

We now construct an action of $A = \pi_1(G)$ on $V$. Let us take $X_w \subset V$, and let $a \in A$. Let $u = a.w$, and note that its image in $G$ is still $v$. The action of $a$ on $V$ will take $X_w$ to $X_u$; using the identifications $X_w \simeq X_v \simeq X_u$ we only need to say how $a$ is supposed to act on $X_v$, and here it acts as $z_w^{-1} a$.

\smallskip
We now construct the space $Y$ by adding edges to $V$.

Let $e$ be an edge of $\widetilde G$ with terminal endpoint $w$ and initial endpoint $u$.
Let $X_e$ denote a copy of the unit interval. Now $G_e$ is a finite subgroup of $G_w$, and so fixes a point in $X_w$ seen as a subset of $V$. We glue the endpoint 1 of $X_e$ to this point. Analogously, we glue the endpoint 0 to a point in $X_u$. Now, using the action of $A$, we equivariantly glue all the endpoints of the edges in the $A$-orbit of $e$. We proceed this way for all (geometric) edges. Note that this construction allows us to extend the definition of $\pi$.

\smallskip

When all the vertex spaces are complete CAT(0), it is clear that so is $Y$.
\end{proof}

\iffalse
\begin{rmk}
\label{drop A2}
When the edge groups of $G$ are all finite, the assumption (A2) becomes unnecessary.

Also, when all the spaces $X_v$ are complete and CAT(0), the assumption (A1) is automatically satisfied.
\end{rmk}
\fi

\begin{rmk}
 Suppose that the spaces $X_v$ in the above proposition are trees. Then the resulting space $Y$ is a tree, and the quotient graph of groups is obtained from $G$ by replacing $v$ by the quotient graph of groups $X /\!\!/ G_v$.
\end{rmk}

We will refer to the above construction as \emph{blowing up} $G$ by the spaces $X_v$. We warn the reader that our notion of a blow-up is not standard terminology
(and has nothing to do with blow-ups in other fields).

\medskip

When dealing with limit groups, we will need a more powerful version of a blow-up. We will use a method by Sam Brown, essentially following \cite[Theorem 3.1]{Brown2016}; to this end let us start with a number of definitions and standard facts.

\begin{dfn}
An \emph{$n$-simplex of type $M_\kappa$} is the convex hull of $n+1$ points in general position lying in the $n$-dimensional model space $M_\kappa$ of curvature $\kappa$, as defined in \cite{bridsonhaefliger1999}.

An $M_\kappa$-simplicial complex $K$ is a simplicial complex in which each simplex is endowed with the metric of a simplex of type $M_\kappa$, and the face inclusions are isometries.
\end{dfn}
Note that we will be interested in the case of $n=2$ and negative $\kappa$, where the model space $M_{\kappa}$ is just a suitably rescaled hyperbolic plane.

\begin{dfn}
Let $K$ be a $M_\kappa$-simplicial complex of dimension at most 2. The \emph{link} of a vertex $v$ is a metric graph whose vertices are edges of $K$ incident at $v$, and edges are $2$-simplices of $K$ containing $v$. Inclusion of edges into simplices in $X$ induces the inclusion of vertices into edges in the link. The length of an edge in the link is equal to the angle the edges corresponding to its endpoints make in the simplex.
\end{dfn}

Let us state a version of Gromov's link condition adapted to our setting.

\begin{thm}[Gromov's link condition~{\cite[Theorem II.5.2]{bridsonhaefliger1999} }]
\label{gromov}
Let $K$ be a $M_\kappa$-simplicial complex of dimension at most 2, endowed with a cocompact simplicial isometric action. Then $K$ is a locally $\mathrm{CAT}(\kappa)$ space \iff the link of each vertex in $K$ is $\mathrm{CAT}(1)$.
\end{thm}
Of course, for a graph being $\mathrm{CAT}(1)$ is equivalent to having no non-trivial simple loop of length less than $2 \pi$.

\begin{lem}[{\cite[Lemma 2.29]{Brown2016}}]
\label{sam's lemma}
For any $0<\theta<\pi$ and any  $A, C$  with $C>A>0$, there exists $k<0$ and a locally $\mathrm{CAT}(k)$
$M_k$-simplicial annulus with one locally geodesic boundary component of length  $A$, and one
boundary  component  of  length $C$  which  is  locally  geodesic  everywhere  except  for  one  point
where it subtends an angle greater than $\theta$.
\end{lem}

\iffalse
\begin{rmk}
\label{sam's lemma rmk}
Observe that we can triangulate the annulus, and then replace each 2-simplex by the corresponding 2-simplex of curvature $k$. This way we still have a $\mathrm{CAT}(k)$ space, and the angle subtended at the special vertex at the boundary is still greater than $\theta$.
\end{rmk}
\fi

\begin{lem}
\label{v cyclic}
Let $Z$ be an infinite virtually cyclic group. Any two cocompact isometric actions on $\R$ have the same kernel, and the quotient of $Z$ by the kernel is isomorphic to either $\Z$ or the infinite dihedral group $D_\infty$.
\end{lem}
\begin{proof}
Clearly both actions on $\R$ can be made into actions on 2-regular trees with a single edge orbit and no edge inversions; each such action gives us a decomposition of $\Z$ into a graph of finite groups, where the kernel of the action is the unique edge group, and the quotient is as claimed. Let $G_1$ and $G_2$ denote the graphs of groups, and $K_1$ and $K_2$ denote the respective edge groups.

Suppose that one of the graphs, say $G_1$, has only one vertex. Then $K_1$ is also equal to the vertex group, and we have $K_2 \leqslant K_1$, since any finite group acting on a tree has a fixed point. If $G_2$ also has a single vertex than $K_1 \leqslant K_2$ by the same argument and we are done. Otherwise $Z/K_1\simeq \Z$ is a quotient of $Z/K_2 \simeq D_\infty$, which is impossible.

Now suppose that both $G_1$ and $G_2$ have two vertices each. Let $G_v$ be a vertex group of $G_1$. Arguing as before we see that it fixes a point in the action of $Z$ on $\widetilde G_2$, and so some index 2 subgroup of $G_v$ fixes an edge. Thus $K_1 \cap K_2$ is a subgroup of $K_1$ of index at most two. If the index is two, then the image of $K_1$ in $Z/K_2 \simeq D_\infty$ is a normal subgroup of cardinality 2. But $D_\infty$ does not have such subgroups, and so $K_1 \leqslant K_2$. By symmetry $K_2 \leqslant K_1$ and we are done.
\end{proof}

Let us record the following standard fact.
\begin{lem}
\label{axis}
Let $Z$ be an infinite virtually cyclic group acting properly by semi-simple isometries on a complete $\mathrm{CAT}(0)$ space $X$. Then $Z$ fixes an image of a geodesic in $X$ (called an \emph{axis}).
\end{lem}

For the purpose of the next proposition, let us introduce some notation.

\begin{dfn}
A CAT($-1$) $M_{-1}$-simplicial complex of dimension at most $2$ with finitely many isometry classes of simplices  will be called \emph{useful}.
\end{dfn}

\begin{prop}[Blow-up with virtually cyclic edge groups]
\label{blow up 2}
Suppose that $\kappa \in \{0,-1\}$.
Let $G$ be a finite graph of groups with virtually cyclic edge groups. For each vertex $v$ suppose that the associated vertex group $G_v$ acts properly cocompactly on a connected complete $\mathrm{CAT}(\kappa)$ simplicial complex $X_v$ by semi-simple isometries. Suppose further that
\begin{enumerate}
\item[(A1)] there exists an orientation of geometric edges of $G$ such that the initial vertex of every edge $e$ is \emph{useful}: that is, it is a vertex $u$ with $X_{u}$ useful; and
 \item[(A2)] when $X_u$ is useful and $e_1, \dots, e_n$ are all the edges of $G$ incident at $u$ carrying an infinite edge group, then the axes preserved by $g^{-1} X_{e_i} g$ with $i \in \{1, \dots , n\}$ and $g_i \not \in X_{e_i}$
can be taken to be simplicial and pairwise transverse.
 \end{enumerate}
Then there exists a connected complete $\mathrm{CAT}(\kappa)$ space $Y$ on which $\pi_1(G)$ acts cocompactly, satisfying the following:
\begin{enumerate}
\item there is a $\pi_1(G)$-equivariant map $\pi \colon Y \to \widetilde G$;
\item if $w$ is a vertex of $\widetilde G$ fixed by $G_v$, then $\pi^{-1}(w)$ is $G_v$-equivariantly isometric to $X_v$.
\end{enumerate}
\end{prop}
\begin{proof}
We will proceed exactly as in the proof of \cref{blow up}, with two exceptions: firstly, we will rescale the spaces $X_v$ before we start the construction; secondly, we will need to deal with infinite virtually cyclic edge groups.
Let us first explain how to deal with the infinite edge groups, and then it will become apparent how we need to rescale the useful spaces.

Let $e$ be an oriented edge of $\widetilde G$ with infinite stabiliser $G_e$ (note that this is a slight abuse of notation, as we usually reserve $G_e$ to be an edge group in $G$ rather than a stabiliser in $\widetilde G$). The group is virtually cyclic, and so, by \cref{axis}, fixes an axis in each of the vertex spaces corresponding to the endpoints of $e$ (it could of course be two axes in a single space, if $e$ is a loop). The actions on these axes are equivariant by \cref{v cyclic}, and the only difference is the length of the quotient of the axis by $G_e$; we will denote the two lengths by $\lambda^+_e$ and $\lambda^-_e$, where $\lambda^+_e$ is the amount by which $G_e$ translates the axis corresponding to the terminus of $e$, and $\lambda^-_e$ to the origin.
%, where the latter is the length measured for the axis in the useful space (if both endpoints of $e$ are useful, than we take $\lambda'_e \geqslant \lambda_e$).

We claim
that we can rescale the spaces $X_v$ and orient the geometric edges so that for any edge $e$ with infinite stabiliser we have the initial vertex of $e$ useful and $\lambda_e^+ \leqslant \lambda_e^-$.
%Let us first assume that the claim has been established, and proceed with the proof.
Let us assume that we have already performed a suitable rescaling -- we will come back to it at the end of the proof.

Let $u$ denote the initial (useful) endpoint of $e$; let $w$ denote the other endpoint of $e$.
We replace each $2$-dimensional simplex in $X_u$ by the comparison simplex of type $M_{-\frac 1 2}$ -- note that, in particular, this does not affect the metric on the $1$-skeleton of $X_u$, and hence does not affect the constant $\lambda_e^-$. Let $\widehat X_u$ denote the resulting space.

In $\widehat X_u$ we have generated, in Brown's terminology, an
\emph{excess angle} $\delta$ (depending on $u$), that is in the link
of any vertex $x$ in $\widehat X_u$ the distance between any two
points which were of distance at least $\pi$ in the link of $x$ in
$X_u$ is at least $\pi +2 \delta$ in the link in $\widehat X_u$.  By
possibly decreasing $\delta$, we may assume that $\delta < \frac \pi
3$, and that the distance between any two distinct vertices in a link
of a vertex in $\widehat X_u$ is at least $\delta$ (this is possible
since there are only finitely many different isometry types of
simplices in $X_u$, and so in $\widehat X_u$).  We still have $G_u$
acting on $\widehat X_u$ simplicially and isometrically.

Suppose that $\lambda^+_e = \lambda^-_e$. Then we take $X_e$ to be a flat strip $[0,1]\times \R$ on which $G_e$ acts by translating the $\R$ factor so that the quotient is isometric to $[0,1]\times \R/\lambda^+_e \Z$.

If $\lambda^+_e \neq \lambda^-_e$ then
we take $X_e$ to be the universal cover of an annulus from \cref{sam's lemma} with boundary curves of length $\lambda_e^+$ and $\lambda_e^-$, and $\theta = \pi-\delta$. The space $X_e$ is a $\mathrm{CAT}(k_e)$
$M_{k_e}$-simplicial complex  for some $k_e<0$.

We glue the preimage (in $X_e$) of each of the boundary curves to the corresponding axis of $G_e$, so that the gluing is an $G_e$-equivariant isometry. The gluing along the preimage of the shorter curve (or both curves if they are of equal length) proceeds along convex subspaces, and so if the vertex space was $\mathrm{CAT}(\mu)$ with $\mu \leqslant 0$, then the glued-up space is still locally $\mathrm{CAT}(\mu)$ along the axis of $G_e$.

The situation is different at the useful end: here we glue in along a non-convex curve. We claim that the resulting space is still locally CAT($k_e$) along this geodesic. This follows from Gromov's link condition (\cref{gromov}), and the observation that in the link of any vertex of $\widehat X_u$ we introduced a single path (a \emph{shortcut}) of length at least $\pi - \delta$ between vertices whose distance before the introduction of the shortcut was at least $\pi + 2 \delta$. A simple closed curve which traverses both endpoints of the shortcut therefore had length at least $2\pi + 4\delta$ before introducing the shortcut, and thus still has length $\geq 2\pi+\delta$ afterwards.
Thus there is still no non-trivial simple loop shorter than $2 \pi$.

We now use the action of $A = \pi_1(G)$ to equivariantly glue in copies of $X_e$ for all edges in the orbit of $e$. We proceed in the same way for all the other (geometric) edges.

Now we need to look at the curvature. The useful spaces have all been altered to be $M_{-\frac 1 2}$-simplicial complexes, and so they are now $\mathrm{CAT}(-\frac 1 2)$.
If we had any $\mathrm{CAT}(0)$ vertex spaces, then they remain $\mathrm{CAT}(0)$. The universal covers $X_e$ of annuli are $\mathrm{CAT}(k_e)$ with $k_e<0$; the infinite strips are $\mathrm{CAT}(0)$. The gluing into the non-useful spaces did not disturb the curvature. A single gluing into a useful space did not disturb the curvature either, but the situation is more complicated when we glue more than one space $X_e$ into a single $\widehat X_u$, since we could have introduced multiple shortcuts of length at least $\pi - \delta$ into a link of a single vertex.
If a curve traverses one (or no) shortcut, then the argument given above shows that it has length at least $2\pi$. If it traverses more more than $2$, then (as $\delta<\pi/3$), it also has length $\geq 2\pi$. In the final case where it goes through exactly two, note
that the endpoints of the shortcuts are all distinct by the transversality assumption (A2). Hence, by the choice of $\delta$, any path connecting these endpoints has length $>\delta$, and so the total path has length $> 2(\pi-\delta) +2\delta$ as well.

We conclude that our space $Y$ is complete and CAT($k$), where $k$ is the maximum of the  values $k_e, \kappa$ and $-\frac 1 2$. When $\kappa = 0$ we have $k=0$ and we are done. Otherwise, observing that we had only finitely many edges in $G$, we have $k<0$, and so we can rescale $Y$ to obtain a $\mathrm{CAT}(-1)$ space, as claimed.

\smallskip
We still need to explain how to rescale the vertex spaces. We order the vertices of the graph of groups $G$ in some way, obtaining  a list $v_1, \dots, v_m$. The space $X_{v_1}$ we do not rescale. Up to reorienting the geometric edges running from $v_1$ to itself we see that the constants $\lambda_e^+$ and $\lambda_e^-$ for such edges satisfy $\lambda_e^+ \leqslant \lambda_e^+$.

We look at the full subgraph $\Gamma$ of $G$ spanned by the vertices $v_1, \dots, v_i$. Inductively, we assume that the spaces corresponding to vertices in $\Gamma$ have already been rescaled as required. Now we attach $v_{i+1}$ to $\Gamma$, together with all edges connecting $v_{i+1}$ to itself or $\Gamma$. If $X_{v_{i+1}}$ is not useful, then we have no edges of the latter type, and all edges connecting $v_{i+1}$ to $\Gamma$ are oriented towards $v_{i+1}$. Clearly we can rescale $X_v$ to be sufficiently small so that the desired inequalities are satisfied (note that there are only finitely many edges to consider).

If $X_{v_{i+1}}$ is useful then we can reorient all edges connecting $v_{i+1}$ to $\Gamma$ so that they run away from $v_{i+1}$. Now we can make $X_{v_{i+1}}$ sufficiently big to satisfy the desired inequalities. We also reorient the edges connecting $v_{i+1}$ to itself in a suitable manner.
 \iffalse
We start by rescaling the useful spaces $X_u$. We label them as $X_{u_1}, \dots, X_{u_n}$ in such a way that the subgraph $G_k$ of $G$ spanned by the non-useful vertices and the vertices $u_1, \dots, u_k$ is connected for every $k$. We start by rescaling $X_{u_1}$, and then we proceed along the labelling. Assuming that we have already rescaled the spaces $X_{u_1}, \dots, X_{u_{k-1}}$, we look at all the edges in $G_k$ incident at $u_k$. Each such an edge $e$ comes with an edge group $G_e$. When all these edge groups are finite, we do not rescale $X_{u_k}$ at all. Otherwise, for each $e$ with an infinite virtually cyclic edge group, we look at the axis fixed by $G_e$ in the vertex space which is not $X_{u_k}$. The group $G_e$ has a non-zero minimal displacement defined on the axis, and we rescale $X_{u_k}$ so that this displacement is always larger when measured in $X_{u_k}$.

Note that, since each edge is adjacent to a vertex with a useful vertex space, we can rescale all such vertex spaces so that for each edge $e$ we know that it is adjacent to a vertex with a useful space which is much bigger than the space at the other end; the precise meaning of much bigger will become apparent in due course.

We proceed precisely as in the proof of \cref{blow up} (and continue with the notation), up to the point in which we glue in strips $X_e \cong [0,1] \times \R$. Previously, we put spherical geometry on the stripes, thus losing the curvature bound. We will now apply the method of \cite[Theorem 3.1]{Brown2016}; what follows is a sketch of this method.
\fi
\end{proof}

\section{Relative Karrass--Pietrowski--Solitar theorem}

The following theorem is a generalisation of a theorem of Karrass--Pietrowski--Solitar~\cite{karrassetal1973}, which lies behind the Nielsen realisation theorem for free groups.

\begin{thm}[Relative Karrass--Pietrowski--Solitar theorem]
\label{KPS}
Let
\[\phi \colon H \to \Out(A)\]
 be a monomorphism with a finite domain, and let
\[A = A_1 \ast \dots \ast A_n \ast B\]
be a decomposition preserved by $H$, with each $A_i$ finitely generated, non-trivial, and $B$ a (possibly trivial) finitely generated free group.
Let $A_1, \dots, A_m$ be the minimal factors.
Then the associated extension $\overline A$ of $A$ by $H$ is isomorphic to the fundamental group of a finite graph of groups with finite edge groups, with $m$ distinguished vertices $v_1, \dots, v_m$, such that the vertex group associated to $v_i$ is a conjugate of the extension $\overline{A_i}$ of $A_i$ by $\Stab_H(i)$, and vertex groups associated to other vertices are finite.
\end{thm}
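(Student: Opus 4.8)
The plan is to combine the Relative Stallings' theorem (\cref{prop: relative splitting}) with the blow-up construction (\cref{blow up}) in an inductive accessibility-style argument, mirroring the classical Karrass--Pietrowski--Solitar proof. First I would set up the induction. Since $\overline A$ is virtually free-product (it contains $A$ with finite index, and $A$ is a free product of torsion-free groups $A_i$ and a free group), I want to produce a graph-of-groups decomposition with finite edge groups in which each conjugate of each $\overline{A_i}$ appears as a vertex group. The natural complexity to induct on is something like the sum of the ranks of the free parts together with the number of factors not yet ``isolated'' at a vertex; one must check this is well-defined and decreases, which is where an accessibility argument (Bestvina--Feighn, or Dunwoody's original accessibility of finitely presented groups) is needed to guarantee termination. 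Alternatively, since $\overline A$ is finitely generated and virtually free modulo the $A_i$, one can appeal to relative accessibility of the pair $(\overline A, \{\overline{A_i}\})$.

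The key steps, in order, would be as follows. Step one: apply \cref{prop: relative splitting} to get a one-edge splitting of $\overline A$ over a finite group in which each $A_i$ fixes a vertex; note that in fact each $\overline{A_i}$ fixes a vertex, since $\overline{A_i}$ is generated by $A_i$ together with elements normalising $A_i$, and these normalising elements can be arranged to fix the same vertex (using that the vertex stabiliser containing $A_i$ is, by \cref{thm: dunwoody}, a conjugate of a vertex group and that $A_i$ has trivial normaliser-behaviour outside $\overline{A_i}$). Step two: look at the finitely many vertex groups of this splitting; each is again an extension of a free product (by intersecting $A$ with the vertex group and using Kurosh) by a finite group, with a distinguished collection of $\overline{A_i}$-conjugates among its factors, and with strictly smaller complexity unless it already has the desired form. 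Step three: apply the inductive hypothesis to each such vertex group, obtaining a graph of groups for it, and then graft these decompositions into the one-edge splitting along the (finite) edge groups to produce a graph of groups for $\overline A$; finiteness of the edge groups is preserved throughout. Step four: at the end, collapse/clean up so that the minimal factors $A_1,\dots,A_m$ correspond to exactly $m$ distinguished vertices with vertex groups conjugate to $\overline{A_i}$, using the $H$-action on indices to identify the non-minimal $A_i$ with conjugates of the minimal ones and hence absorb their vertices.

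The main obstacle I anticipate is the termination/accessibility part: ensuring the recursion actually stops and yields a \emph{finite} graph of groups. Stallings-type splittings can in principle iterate forever without an accessibility input, and here one must run accessibility \emph{relative} to the subgroups $\overline{A_i}$, which are infinite, so the standard ``Dunwoody accessibility for finitely presented groups'' needs to be used in its relative form (or one must verify that $\overline A$ is finitely presented and that the $A_i$ being finitely generated and torsion-free suffices to keep the reduced complexity bounded). A secondary technical point is the bookkeeping in Step two: after splitting, a given $\overline{A_i}$ lands inside exactly one vertex group up to conjugacy, but one must check that the Kurosh subgroup theorem applied to $A \cap (\text{vertex group})$ returns the $A_i$ as honest free factors of the vertex-level free product (not merely subgroups), so that the inductive hypothesis genuinely applies with the same list of distinguished factors. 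Finally, a small but real point is verifying that the vertex groups ``associated to other vertices are finite'': this follows because once all the $A_i$-conjugates have been isolated at their own vertices, the remaining vertex groups contain no infinite torsion-free free-factor part, hence (being virtually free with no infinite free-product factor left and finite edge groups) must themselves be finite.
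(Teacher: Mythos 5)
Your outline reproduces the paper's argument almost step for step: a single application of \cref{prop: relative splitting}, a Kurosh analysis of the vertex groups, induction, reassembly via \cref{blow up}, and then precisely the two clean-up checks you list in Step four (uniqueness of the distinguished vertices, and getting the full $\Stab_H(i)$ in the extension). The one genuine hole is the one you flag yourself: termination. Here you reach for the wrong tool. No accessibility input (Dunwoody, Bestvina--Feighn, or a ``relative'' variant, which is not an off-the-shelf statement in the form you would need) is required. The induction is on the Grushko-type complexity $n+\mathrm{rk}(B)$ of the free product $A$ itself, and the decrease is elementary: Kurosh's theorem applied to $A\leqslant\pi_1(P)=\overline A$ exhibits $P_v\cap A$ as a free factor of $A$, hence finitely generated and of complexity at most that of $A$, with equality only if $P_v\cap A=A$; and $A\leqslant P_v$ is impossible because the one-edge splitting is non-trivial with finite edge group, so $P_v\cap A$ has infinite index in $\overline A$. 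Thus the complexity strictly drops at every vertex and the recursion bottoms out. This is exactly the mechanism of the original Karrass--Pietrowski--Solitar proof, and it is the step your proposal leaves unresolved.

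Your ``secondary technical point'' --- that Kurosh returns the $A_i$ as honest free factors of $P_v\cap A$ rather than as mere subgroups --- is where the hypotheses that the $A_i$ are non-trivial and torsion-free do their work, and it deserves to be spelled out: if $P_v\cap xA_ix^{-1}$ is non-trivial it is infinite, it fixes both $x^{-1}.v$ and the vertex fixed by $A_i$, and since edge stabilisers are finite these vertices coincide; hence $xA_ix^{-1}\leqslant P_v$, and moreover at most one conjugate of each $A_i$ occurs per vertex group. The same ``an infinite group cannot fix two vertices'' observation is what justifies your Step one claim that the normalising elements of $A_i$ fix the vertex fixed by $A_i$ (any such element permutes the vertices fixed by $A_i$, of which there is only one); your parenthetical appeal to \cref{thm: dunwoody} is not the right reason. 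With those two points made precise, your argument is the paper's.
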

\begin{proof}
The proof goes along precisely the same lines as the original proof of Karrass--Pietrowski--Solitar~\cite{karrassetal1973}, with the exception that we use Relative Stallings' Theorem (\cref{prop: relative splitting}) instead of the classical one.

We will prove the result by an induction on a
\emph{complexity} $(n, f)$ where $n$ is the number of factors $A_i$,
and $f$ is the rank of the free group $B$ in the decomposition.
We order the complexity lexicographically.
 The cases of complexity $(0, f)$ follow from the usual Nielsen realisation theorem for free groups (see Theorem~\ref{NR for free groups}).

\smallskip
Thus, for the inductive step, we assume a complexity $(m, f)$ with $m>0$.
We begin by applying \cref{prop: relative splitting} to the
finite extension $\overline A$. We obtain a graph of groups $P$ with
one edge and a finite edge group, such that each $A_i$ lies up to
conjugation in a vertex group, and no non-trivial subgroup of any factor $A_i$ fixes an edge.

Let $v$ be any vertex of $\widetilde P$. The group $P_v$ is a finite extension of $A \cap P_v$ by a subgroup $H_v$ of $H$.
Let us look at the structure of $P_{v} \cap A$ more closely.
%Apply
%Kurosh's theorem to this group as a subgroup of $A$ to see that it is
%a free product of subgroups of conjugates of the $A_i$, and a free
%group. We will show that this structure is in fact much more
%constrained.
%in two ways. First, for the action of $A$ on the tree associated to $P$. This implies that $P_v \cap A$ is an amalgamated\seb{its not a free factor, since there could be an edge group. This didn't appear before, as $A$ was torsion-free} free factor of $A$. This already implies that the complexity of $P_v \cap A$ is not larger than that of $A$, and can only be equal if the groups are equal.\seb{is this actually true?}
%%% X *_F Y = A mit F finite, A wie oben geht nur wenn F in einer der Faktoren von A ist

Consider the graph of groups associated to the product $A_1 \ast
\dots A_n \ast B$ and apply Kurosh's theorem~\cite[Theorem I.14]{serre2003} to the subgroup $P_{v} \cap
A$. We obtain that $P_{v} \cap A$ is a free product of groups of the form $P_{v}
\cap x A_i x^{-1}$ for some $x \in A$, and a free group $B'$.

Let us suppose that the intersection $P_{v} \cap x A_i x^{-1} $ is nontrivial for some $i$ and $x \in A$. This implies that a non-trivial subgroup $G$ of $A_i$ fixes the vertex $x^{-1} .v$. We also know that $A_i$ fixes some vertex $v_i$ in $\widetilde P$ by construction, and thus so does $G$.
If $x^{-1}.v \neq v_i$, this would imply that $G$ fixes an edge, which is impossible.
Hence $v_i = x^{-1} .v$ and in particular we have that $x A_i x^{-1} \leqslant P_v$.

Now suppose that $P_{v} \cap y A_i y^{-1}$ is non-trivial for some
other element $y \in A$. Then $x^{-1}.v = v_i = y^{-1} .v$, and so $x
y^{-1} \in A \cap P_v$. This implies that the two free factors $P_{v}
\cap x A_i x^{-1}$ and $P_{v} \cap y A_i y^{-1}$ of $P_v \cap A$ are
conjugate inside the group, and so they must coincide.

%Summarising, we have seen that the stabiliser $P_{v} \cap A$ is a free product of groups of the form\dawid{sth missing here?}

\smallskip We consider the action of $A$ on the tree $\tilde{P}$, and conclude that $A$ is
equal to the fundamental group of the graph of groups $\tilde{P}/\!\!/A$.
The discussion above shows that:
\begin{enumerate}[i)]
\item The stabilizer of a vertex $v \in \tilde P$ has the structure
\[ P_v \cap A = x_{i(v,1)} A_{i(v,1)} x_{i(v,1)}^{-1} \ast \dots \ast
x_{i(v,k)} A_{i(v,k)} x_{i(v,k)}^{-1} \ast B' \]
where the indices $i(v,k)$ are all distinct, and $B'$ is some free group.
\item If a conjugate of $A_i$ intersects some stabilizer of $v$ non-trivially, then it stabilizes $v$.
\item For each $i$ there is exactly one vertex $v$ so
  that a conjugate of $A_i$ appears as $A_{i(v,l)}$ in the description above.
\item The edge groups in $\tilde{P}/\!\!/A$ are trivial.
\end{enumerate}
Since the splitting which $P$ defines is non-trivial, the index of
$P_v \cap A$ in $\overline A$ is infinite, and thus $A$ is not a
subgroup of $P_v$ for any $v$.

Next, we aim to show that the complexity of each $P_v \cap A$ is
strictly smaller than that of $A$. To begin, note that the only way
that this could fail is if there is some vertex $w$ so that
\[ P_w \cap A = x_1 A_1 x_1^{-1} \ast \dots \ast x_m A_m x_m^{-1} \ast B' \]
for $B'$ a free group. Since all edge groups in $\tilde{P}/\!\!/A$ are trivial, $A$ is obtained
from $P_w \cap A$ by a free product with a free group. Such an operation cannot decrease
the rank of $B'$, and in fact increases it unless the free product is trivial. But in the latter
case we would have $P_w \cap A = A$, which is impossible.

We have thus shown that each $P_v$ is an extension
\[ P_v \cap A \to P_v \to H_v \]
where $H_v$ is a subgroup of $H$, the group $P_v \cap A$ decomposes in a way which is preserved by $H_v$, and its complexity is smaller than that of $A$. Therefore the group $P_v$ satisfies the assumption of the inductive hypothesis.

We now use \cref{blow up} (together with the remark following it) to construct a new graph of groups $Q$, by blowing $P$ up at $u$ by the result of the theorem applied to $P_u$, with $u$ varying over some chosen lifts of the vertices of $P$.

By construction, $Q$ is a finite graph of groups with finite edge
groups, and the fundamental group of $Q$ is indeed $\overline A$.
Also, $Q$ inherits distinguished vertices from the graphs of groups we blew up with.
Thus, $Q$ is as required in the assertion of our theorem, with two possible exceptions.

Firstly, it might have too many distinguished vertices.
This would happen if for some $i$ and $j$ we have $A_i$ and $A_j$ both being subgroups of, say, $P_v$, which are conjugate in $\overline A$ but not in $P_v$.
Let $h \in \overline A$ be an element such that $h A_i h^{-1} = A_j$.
Since both $A_i$ and $A_j$ fix only one vertex, and this vertex is $v$, we must have $h \in \P_v$, and so $A_i$ and $A_j$ are conjugate inside $P_v$.

Secondly, it could be that the finite extensions of $A_i$ we obtain as vertex groups are not extensions by $\Stab_H(i)$. This would happen if $\Stab_H(i)$ is not a subgroup of $H_v$. Let us take $h \in \overline A$ in the preimage of $\Stab_H(i)$, such that $h A_i h^{-1} = A_i$. Then in the action on $\widetilde P$ the element $h$ takes a vertex fixed by $A_i$ to another such; if these were different, then $A_i$ would fix an edge, which is impossible. Thus $h$ fixes the same vertex as $A_i$. This finishes the proof.
\end{proof}

\section{Fixed points in the graph of relative free splittings}

Consider a free product decomposition
\[ A = A_1 \ast \dots \ast A_n \ast B\]
with $B$ a finitely generated free group.
Handel and Mosher~\cite{HandelMosher2014} (see also the work of Horbez~\cite{Horbez2014}) defined a \emph{graph of relative free splittings} $\mathcal{FS}(A, \{A_1, \dots, A_n\})$ associated to such a decomposition. Its vertices are finite non-trivial graphs of groups with trivial edge groups, and such that each $A_i$ is contained in a conjugate of a vertex group; two such graphs of groups define the same vertex when the associated universal covers are $A$-equivariantly isometric. Two vertices are connected by an edge \iff the graphs of groups admit a common refinement.

In their article, Handel and Mosher prove that $\mathcal{FS}(A, \{A_1, \dots, A_n\})$ is connected and Gromov hyperbolic~\cite[Theorem 1.1]{HandelMosher2014}.

Observe that the subgroup $\Out(A, \{A_1, \dots, A_n \})$ of $\Out(A)$ consisting of those outer automorphisms of $A$ which preserve the decomposition
\[ A = A_1 \ast \dots \ast A_n \ast B\]
acts on this graph.
%(In his paper Horbez considers a slightly smaller subgroups, denoted by $\Out(A, \{ A_1, \dots, A_n\})$, whose elements preserve the conjugacy class of each $A_i$.)
We offer the following fixed point theorem for this action on $\mathcal{FS}(A, \{A_1, \dots, A_n\})$.

\begin{cor}
\label{fixed points}
Let $H \leqslant \Out(A, \{A_1, \dots, A_n \})$ be a finite subgroup, and suppose that the factors $A_i$ are finitely generated.
Then $H$ fixes a point in the free-splitting graph $\mathcal{FS}(A, \{A_1, \dots, A_n \})$.
\end{cor}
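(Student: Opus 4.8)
The plan is to manufacture a single relative free splitting of $A$ that is intrinsically associated to the extension $\overline A$; being canonical, it will automatically be invariant under the conjugation action of $\overline A$ on itself, and that action induces exactly the action of $H$ on $\mathcal{FS}(A,\{A_1,\dots,A_n\})$. First, after deleting any trivial $A_i$ (which changes neither $A$ nor $\mathcal{FS}$), I would check the hypotheses of \cref{KPS}: the inclusion $H\hookrightarrow\Out(A,\{A_1,\dots,A_n\})\leqslant\Out(A)$ is a monomorphism with finite domain, the decomposition $A=A_1\ast\dots\ast A_n\ast B$ is preserved by $H$, and since $A$ is a non-trivial free product of torsion-free groups it has trivial centre, so the extension $\overline A$ is defined. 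Applying \cref{prop: relative splitting} (or the finer \cref{KPS}) then yields a non-trivial, cocompact action of $\overline A$ on a tree $T$ with finite edge stabilisers, in which each $A_i$ fixes a vertex $v_i$.

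Next, I would pass to the restricted action of the normal subgroup $A\leqslant\overline A$ on $T$. Since $A$ is torsion-free, all of its edge stabilisers are trivial, and $A\backslash T$ is a finite graph of groups because $A\backslash\overline A\cong H$ is finite while $\overline A\backslash T$ is finite. Moreover the $A$-action has no global fixed point: otherwise the fixed subtree $T^A$ would be non-empty, and being $\overline A$-invariant and fixed pointwise by $A$ it would carry an action of the finite group $H=\overline A/A$, which necessarily has a fixed point; this would furnish $\overline A$ with a global fixed point in $T$, contradicting the non-triviality of the splitting. Hence $A\backslash T$ is a non-trivial finite graph of groups with trivial edge groups in which every $A_i$ is elliptic (it fixes $v_i$), so it represents a vertex $\ast\in\mathcal{FS}(A,\{A_1,\dots,A_n\})$.

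Finally, I would verify that $\ast$ is $H$-invariant. For $g\in\overline A$ the bijection $x\mapsto g\cdot x$ of $T$ intertwines the $A$-action with its twist by the conjugation automorphism $c_g\in\Aut(A)$; hence the outer class of $c_g$ carries the $A$-tree $T$ to an $A$-equivariantly isometric tree, and therefore fixes the vertex $\ast$. As $\overline A\to H$ is onto and the outer classes of the $c_g$ range exactly over $H\leqslant\Out(A,\{A_1,\dots,A_n\})$, the whole group $H$ fixes $\ast$, which is the assertion.

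The only step that needs real care is confirming that $A\backslash T$ is an honest vertex of $\mathcal{FS}$ --- in particular that the $A$-action on $T$ is non-trivial, as the definition of $\mathcal{FS}$ requires --- together with matching the conjugation action of $\overline A$ with the prescribed action of $H$ on $\mathcal{FS}$; the remaining ingredients are routine Bass--Serre theory together with the fact that a finite group acting on a tree has a fixed point.
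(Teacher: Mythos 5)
Your proposal is correct and follows essentially the same route as the paper: apply \cref{KPS} to obtain an action of $\overline A$ on a tree $T$ with finite edge stabilisers, observe that the restricted $A$-action defines a vertex of $\mathcal{FS}(A,\{A_1,\dots,A_n\})$, and deduce $H$-invariance from the fact that the whole of $\overline A$ acts on $T$. You merely spell out the details (triviality of the $A$-edge stabilisers, non-triviality of the restricted action, and the identification of the conjugation action with that of $H$) that the paper's two-line proof leaves implicit.
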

\begin{proof}
\cref{KPS} gives us an action of the extension $\overline A$ on a tree $T$; in particular $A$ acts on this tree, and this action satisfies the definition of a vertex in $\mathcal{FS}(A, \{A_1, \dots, A_n \})$.
Since the whole of $\overline A$ acts on $T$, every outer automorphism in $H$ fixes this vertex.
\end{proof}

\section{Fixed points in the outer space of a free product}

Take any finitely generated group $A$, and consider its \emph{Grushko decomposition}, that is a free splitting
\[
 A = A_1 \ast \dots \ast A_n \ast B
\]
where $B$ is a finitely generated free group, and each group $A_i$ is finitely generated and freely indecomposible, that is it cannot act on a tree without a global fixed point (note that $\Z$ is not freely indecomposible in this sense).

Grushko's Theorem~\cite{Gruschko1940} tells us that such a decomposition is essentially unique; more precisely, if
\[
 A = A_1' \ast \dots \ast A_m' \ast B'
\]
is another such decomposition, then $B \cong B'$, $m=n$, and there is a permutation $\beta$ of the set $\{1, \dots, n\}$ such that $A_i$ is conjugate to $A_{\beta(i)}'$. In particular, this implies that the decomposition
\[
 A = A_1 \ast \dots \ast A_n \ast B
\]
is preserved in our sense by every outer automorphism of $A$.

In~\cite{GuirardelLevitt2007a} Guirardel and Levitt introduced $P \mathcal O$, the (projectivised) outer space of a free product. It has a structure of a `simplicial complex with missing faces' -- it is homeomorphic to a union of open simplices in a metric realisation of a simplicial complex. In particular, each open simplex contains a barycentre; the barycentres are
equivalence classes of pairs $(G, \iota)$, where:
\begin{enumerate}
 \item $G$ is a finite graph of groups with trivial edge groups;
 \item edges of $G$ are given length $1$;
 \item for every $i \in \{1, \dots, n \}$, there is a unique vertex $v_i$ in $G$ such that the vertex group $G_{v_i}$ is conjugate to $A_i$;
 \item all other vertices have trivial vertex groups;
 \item every leaf of $G$ is one of the vertices $\{ v_1, \dots, v_n \}$;
 \item $\iota \colon \pi_1(G) \to A$ is an isomorphism.
\end{enumerate}
The equivalence relation is given by postcomposing $\iota$ with an inner automorphism of $A$, and by multiplying the lengths of all edges of $G$ by a positive constant. We also consider two pairs $G, \iota$ and $G',\iota'$ equivalent if there exists an isometry $\psi \colon G \to G'$ such that $\iota = \iota' \circ \psi$.

Because of the essential uniqueness of the Grushko decomposition, the group $\Out(A)$ acts on $P \mathcal O$ by postcomposing the marking $\iota$.
We offer the following result for this action.

\begin{cor}
\label{fixed point GL}
Let $A$ be a finitely generated group,
and let $H \leqslant \Out(A)$ be a finite subgroup. Then $H$ fixes a barycentre in $P \mathcal O$.
\end{cor}
\begin{proof}
\cref{KPS} gives us an action of the extension $\overline A$ on a tree $T$, and we may assume that this action is minimal; in particular $A$ acts on this tree, and this action satisfies the definition of a vertex in $P \mathcal O$ (with all edge lengths equal to 1).
Since the whole of $\overline A$ acts on $T$, every outer automorphism in $H$ fixes this vertex.
\end{proof}

Note that $P \mathcal O$ has been shown in~\cite[Theorem 4.2, Corollary 4.4]{GuirardelLevitt2007a} to be contractible.

\section{Relative Nielsen realisation}

In this section we use \cref{KPS} to prove relative Nielsen Realisation for free products. To do this we need to formalise the notion of a marking of a space.

\begin{dfn}
We say that a path-connected topological space $X$ with a universal covering $\widetilde X$ is \emph{marked} by a group $A$ \iff it
comes equipped with an isomorphism between $A$ and the group of deck transformations of $\widetilde X$.
\end{dfn}
\begin{rmk}
Given a space $X$ marked by a group $A$, we obtain an isomorphism $A
\cong \pi_1(X,p)$ by choosing a basepoint $\widetilde p \in \widetilde
X$ (where $p$ denotes its projection in $X$).

Conversely, an isomorphism $A \cong \pi_1(X,p)$ together with a choice
of a lift $\widetilde p \in \widetilde X$ of $p$ determines the
marking in the sense of the previous definition.
\end{rmk}

\begin{dfn}
Suppose that we are given an embedding $\pi_1(X) \into \pi_1(Y)$ of fundamental groups of two path-connected spaces $X$ and $Y$, both marked. A map $\iota \colon X \to Y$ is said to \emph{respect the markings via the map $\widetilde \iota$} \iff $\widetilde \iota \colon \widetilde X \to \widetilde Y$ is $\pi_1(X)$-equivariant (\wrt the given embedding $\pi_1(X) \into \pi_1(Y)$), and satisfies the commutative diagram
\[ \xymatrix{ \widetilde X \ar[r]^{\widetilde \iota} \ar[d] & \widetilde Y \ar[d] \\
X \ar[r]^\iota & Y } \]

We say that $\iota$ \emph{respects the markings} \iff such an $\widetilde \iota$ exists.
\end{dfn}

Suppose that we have a metric space $X$ marked by a group $A$, and a group $H$ acting on $X$. Of course such a setup yields the induced action $H \to \Out(A)$, but in fact it does more: it gives us an extension
\[ 1 \to A \to \overline A \to H \to 1 \]
where $\overline A$ is the group of all lifts of elements of $H$ to automorphisms of the universal covering $\widetilde X$ of $X$.

\begin{dfn}
Suppose that we are given a group extension
\[ A \to \overline A \to H \]
We say that an action $\phi \colon H \to \Isom(X)$ of $H$ on a metric space $X$ \emph{realises the extension} $\overline A$ \iff $X$ is marked by $A$, and the extension
\[  \pi_1(X) \to G \to H \]
induced by $\phi$ fits into the commutative diagram
\[ \xymatrix{
 A \ar[d]^{\simeq} \ar[r] & \overline A \ar[r] \ar[d]^{\simeq} & H \ar@{=}[d] \\
 \pi_1(X) \ar[r] & G \ar[r] & H
}
\]

When $A$ is centre-free, and we are given an embedding $H \leqslant \Out(A)$, we say that an action $\phi$ as before \emph{realises the action} $H \to \Out(A)$ \iff it realises the corresponding extension.
\end{dfn}

Now we are ready to state the relative Nielsen Realisation theorem for free products.

\begin{thm}[Relative Nielsen Realisation]
\label{rel NR}
Let $\phi \colon H \to \Out(A)$ be a homomorphism with a finite domain, and let
\[A = A_1 \ast \dots \ast A_n \ast B\]
be a decomposition preserved by $H$, with each $A_i$ finitely generated, and $B$ a (possibly trivial) finitely generated free group.
Let $A_1, \dots, A_m$ be the minimal factors.

Suppose that for each $i \in \{1, \dots, m\}$ we are given a complete NPC space $X_i$ marked by $A_i$, on which $\Stab_i(H)$ acts in such a way that the associated extension of $A_i$ by $\Stab_H(i)$ is isomorphic (as an extension) to the extension $\overline A_i$ coming from $\overline A$.
Then there exists a complete NPC space $X$ realising the action $\phi$, and such that for each $i \in \{1, \dots, m\}$ we have a $\Stab_H(i)$-equivariant embedding $\iota_i \colon X_i \to X$ which preserves the marking.

Moreover, the images of the spaces $X_i$ are disjoint, and collapsing each $X_i$ and its images under the action of $H$ individually to a point yields a graph with fundamental group abstractly isomorphic to the free group $B$.
\end{thm}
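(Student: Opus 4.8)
The plan is to build $X$ by the same blow-up strategy used for \cref{KPS}, but now working with NPC spaces rather than trees. First I would invoke \cref{KPS} to present $\overline A$ as $\pi_1(G)$ for a finite graph of groups $G$ with finite edge groups, with distinguished vertices $v_1, \dots, v_m$ whose vertex groups are conjugates of the extensions $\overline{A_i}$, and all other vertex groups finite. Passing to the Bass--Serre tree $\widetilde G$, on which $\overline A$ acts with finite edge stabilisers, I want to replace the distinguished vertices by copies of the $X_i$ and the remaining (finite-group) vertices by points, in an equivariant way.

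The key technical step is a version of \cref{blow up} for NPC spaces in place of CAT(0) trees. The proof of \cref{blow up} carries over essentially verbatim: the only place completeness and non-positive curvature of the fibre $X$ was used is to produce, for each edge $e$ emanating from the distinguished vertex $w$, a point $p_e \in X_w$ fixed by the (finite) edge stabiliser — this is exactly the Bruhat--Tits fixed point theorem, which holds for any complete NPC space by lifting to the CAT(0) universal cover and using that a finite group acting on a complete CAT(0) space has a fixed point (the circumcentre of an orbit). Concretely: for each distinguished vertex $v_i$, the hypothesis gives us $X_i$ marked by $A_i$ with a $\Stab_H(i)$-action inducing the extension $\overline{A_i}$; since $\Stab_H(i)$ contains every finite edge stabiliser $\Stab(e) \cap \overline{A_i}$ of edges at $v_i$, each such stabiliser has a fixed point $p_e$ in $X_i$, and I attach $e$ there, then spread over the orbit equivariantly exactly as in the proof of \cref{blow up}. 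For the non-distinguished vertices, whose groups are finite, I blow up by a single point — i.e. I do nothing — so edges simply remain attached. The resulting space $X$ thus consists of the copies of the universal covers $\widetilde X_i$ (one for each lift of each $v_i$) glued along edges to the rest of $\widetilde G$; it is NPC because gluing complete NPC spaces along isometrically embedded convex subsets (here, single points) preserves the NPC property (Bridson--Haefliger, gluing theorem), and it is complete because all the pieces are and the gluing is along a locally finite pattern. The action of $\overline A$ on $X$ is defined precisely as in the proof of \cref{blow up}: off the fibres it is read off from $\widetilde G$, and on a fibre $X_u$ over $u = z.v_i$ one conjugates by $x_1 x_2^{-1}$ where $x_1, x_2$ are the chosen conjugators for $\Stab(u)$ and $\Stab(z.u)$. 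Restricting this action of $\overline A$ to $A$ gives a free, properly discontinuous action (since $A$ acts freely on $\widetilde G$ away from vertices, and on each $\widetilde X_i$ via deck transformations), so $X := A \backslash \widetilde X$ is a complete NPC space marked by $A$, and the residual $H = \overline A / A$ action realises $\phi$.

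The remaining assertions are then quick. The embedding $\iota_i \colon X_i \to X$ is obtained by taking the fibre $\widetilde X_{w_i}$ over the vertex $w_i$ fixed by (the chosen conjugate of) $\overline{A_i}$ and pushing it down to $X$; it respects the marking via $\widetilde\iota_i \colon \widetilde X_i = \widetilde X_{w_i} \hookrightarrow \widetilde X$, which is $A_i$-equivariant and $\Stab_H(i)$-equivariant by property (2) of the blow-up construction (the restricted action of $\overline{A_i}$ on the fibre is the given one). The images of distinct $X_i$, and of $X_i$ under distinct elements of $H$, are disjoint because they sit over distinct vertices of $\widetilde G$ (distinct lifts of distinct $v_i$, using that $A_i$ fixes exactly one vertex, as established inside the proof of \cref{KPS}). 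Finally, collapsing each $X_i$ and all its $H$-translates individually to a point is $\overline A$-equivariant (property (3) of \cref{blow up}) and yields $\widetilde G$ with the finite-group vertices also collapsed by the $A$-action; the quotient by $A$ is a finite graph, and since the edge groups of $G$ are finite while $A$ is torsion-free, $A$ acts freely on the edges and on the collapsed (finite-stabiliser) vertices, so the quotient graph has fundamental group a free group — and counting: its rank equals $\operatorname{rk}(B)$, because by construction the only noncollapsed data in $\widetilde G / A$ came from the $B$-factor of the Kurosh decomposition inside each $P_v$.

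I expect the main obstacle to be the careful bookkeeping in the equivariant gluing: ensuring that the $\overline A$-action on $X$ built fibrewise is genuinely well-defined and isometric across the gluing locus, and in particular that the conjugators $x_1, x_2$ can be chosen coherently for all lifts of all the distinguished vertices simultaneously (rather than one vertex type at a time). This is exactly the content of the proof of \cref{blow up}, so the honest work is to check that that proof goes through with ``complete CAT(0)'' weakened to ``complete NPC'' — which it does, the Bruhat--Tits step being the only inequality used — and then to assemble the $m$ separate blow-ups into one space without the orbits of different $X_i$ interfering, which is guaranteed by the one-vertex-per-factor rigidity from \cref{KPS}.
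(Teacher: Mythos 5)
Your construction is, in the end, the same as the paper's: apply \cref{KPS}, blow up the Bass--Serre tree $\widetilde G$ at the lifts of the distinguished vertices by the universal covers $\widetilde{X_i}$, and pass to the quotient by $A$. However, the step you single out as the key technical point is justified by a false statement, and taken literally it would fail. There is no fixed-point theorem for finite groups acting on complete NPC spaces: a finite cyclic group acting on a circle by rotation has no fixed point, and your ``lift to the CAT(0) universal cover'' argument breaks because the group of all lifts of a finite group $F$ of isometries of $X_i$ to $\widetilde{X_i}$ is an extension of $F$ by $\pi_1(X_i)$, which is not finite, so Bruhat--Tits does not apply to it. Consequently, attaching the edge $e$ at ``a fixed point $p_e \in X_i$ of the image of $\Stab(e)\cap \overline{A_i}$'' is not legitimate: such a point need not exist, and even when it does it is the wrong object, since the attaching has to be performed in $\widetilde X$, equivariantly for the actual finite subgroup $\Stab(e)\cap\overline{A_i}$ of $\overline{A_i}$ acting on $\widetilde{X_i}$.

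The repair is that no NPC generalisation of \cref{blow up} is needed at all: the fibre over a lift of $v_i$ is the complete CAT(0) space $\widetilde{X_i}$, on which the vertex group $\overline{A_i}$ acts (this action is exactly what the hypothesis on the marking and the extension isomorphism supplies); the edge stabilisers are finite subgroups of $\overline{A_i}$ and hence fix points of $\widetilde{X_i}$ by Bruhat--Tits, and one attaches edges there and spreads over orbits as in the proof of \cref{blow up}. Your closing description of $\widetilde X$ (with fibres $\widetilde{X_i}$) is consistent with this, so the error lies in the justification rather than in the object you ultimately build. Two smaller omissions: you should record that each $A_i$ is torsion-free because it is the deck group of a complete CAT(0) space---this hypothesis of \cref{KPS} is not among the hypotheses of the present theorem---and the theorem only assumes that $\phi$ is a homomorphism, so the non-injective case must be handled separately, e.g.\ by factoring through $H/\ker\phi$ and letting $\ker\phi$ act trivially on the resulting space.
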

As outlined in the introduction, the proof is very similar to the classical
proof of Nielsen realisation, with our new relative Stallings' and
Karrass--Pietrowski--Solitar theorems in place of the classical ones.
\begin{proof}

%Note that the groups $A_i$ are torsion-free, since they are fundamental groups of complete NPC spaces.

When $\phi$ is injective we first apply \cref{KPS} to obtain a graph of groups $G$, and then use \cref{blow up} and blow up each vertex of $\widetilde G$ by the appropriate $\widetilde{X_i}$; we call the resulting space $\widetilde X$. The space $X$ is obtained by taking the quotient of the action of $A$ on $\widetilde X$.

If $\phi$ is not injective, then we consider the induced map \[H / \ker \phi \to \Out(A)\] apply the previous paragraph, and declare $H$ to act on the resulting space with $\ker \phi$ in the kernel.
%Note that $\widetilde X$ is the universal covering of $X$.
 \end{proof}

\begin{rmk}
In the above theorem the hypothesis on the spaces $X_i$ being complete and  NPC can be replaced by the condition that they are semi-locally simply connected, and any finite group acting on their universal covering fixes at least one point.
\end{rmk}

\begin{rmk}
On the other hand, when we strengthen the hypothesis and require the spaces $X_i$ to be NPC cube complexes (with the actions of our finite groups preserving the combinatorial structure), then we may arrange for $X$ to also be a cube complex.
When constructing the blow ups, we may always take the fixed points of the finite groups to be midpoints of cubes, and then $X$ is naturally a cube complex, when we take the cubical barycentric subdivisions of the complexes $X_i$ instead of the original cube complexes $X_i$.
\end{rmk}

\begin{rmk}
  In \cite{hop} Osajda, Przytycki and the first-named author develop a more topological approach to Nielsen realisation and the
  Karrass--Pietrowski--Solitar theorem. In that article, Nielsen
  realisation is shown first, using \emph{dismantlability} of the sphere
  graph (or free splitting graph) of a free group, and the Karrass--Pietrowski--Solitar theorem then
  follows as a consequence.

  The relative Nielsen realisation theorem with all free factors $A_i$ being finitely generated free groups is a fairly
  quick consequence of the methods developed in \cite{hop} -- however, the
  more general version proved here cannot at the current time be shown
  using the methods of \cite{hop}: to the authors knowledge no analogue of
  the sphere graph exhibits suitable  properties. It would be an interesting
  problem
  to find a ``splitting graph'' for free products which has dismantling
  properties analogous to the ones shown in \cite{hop} to hold for arc, sphere
  and disk graphs.
\end{rmk}

\section{Nielsen realisation for limit groups}

\begin{dfn}
A group $A$ is called \emph{fully residually free} \iff for any finite subset $\{a_1, \dots, a_n \} \subseteq A \s- \{1\}$ there exists a free quotient $q \colon A \to F$ such that $q(a_i) \neq 1$ for each $i$.

A finitely generated fully residually free group is called a \emph{limit group}.
\end{dfn}
Note that the definition immediately implies that limit groups are torsion free.

The three most immediate classes of examples are free groups, free abelian groups, and surface groups. For each of these classes we have a Nielsen realisation results, and these will form a basis for an inductive argument.

\begin{thm}[{\cite{culler1984, Khramtsov1985, Zimmermann1981}}]
\label{NR for free groups}
 Let $H$ be a finite subgroup of $\Out(F_n)$, where $F_n$ denotes the free group of rank $n$. There exists a finite graph $X$ realising the given action $H < \Out(F_n)$.
\end{thm}

\begin{thm}
\label{NR for abelian groups}
 Let
 \[
  \Z^n \to \overline {\Z^n} \to H
 \]
be a finite extension of $\Z^n$. There exists a metric $n$-torus $X$ realising this extension.
\end{thm}

\begin{thm}[Kerckhoff {\cite{Kerckhoff1980, Kerckhoff1983}}]
\label{NR}
 Let $H$ be a finite subgroup of $\Out(\pi_1(\Sigma))$ where $\Sigma$ is a closed surface of genus at least 2. There exists a hyperbolic metric on $\Sigma$ such that $\Sigma$ endowed with this metric realises the given action $H < \Out(\pi_1(\Sigma))$.
\end{thm}

We are going to use an inductive approach to limit groups; such an approach is possible since the class of limit groups coincides with the class of constructible limit groups (for this and other facts see \cite{BestvinaFeighn2009}). What is important for us is that this means that every limit groups has a well-defined \emph{level}, which is a natural number. We do not need to recall the precise definition of the level; it will suffice to recall two facts:
\begin{itemize}
 \item the level of a limit group is equal to $0$ \iff the group is finitely generated and free; and
 \item a limit group $A$ of level $n$ either is a free product of two limit groups of level $\leq n-1$ or it admits a generalised abelian decomposition (which is a special kind of a graph of groups decomposition; see e.g. \cite{BestvinaFeighn2009} for details) and a proper epimorphism $\rho \colon A \to B$ to a limit group of level $\leq n-1$, such that in particular every non-abelian, non-free and non-surface vertex group of the generalised abelian decomposition is mapped injectively by $\rho$.
\end{itemize}

\smallskip
The crucial property of one-ended limit groups is that they admit JSJ-decompositions invariant under automorphisms.

\begin{thm}[Bumagin--Kharlampovich--Myasnikov~{\cite[Theorem 3.13 and Lemma 3.16]{Bumaginetal2007}}]
\label{JSJ}
Let $A$ be a one-ended limit group.  Then there exists a finite graph
of groups $G$ with $\pi_1(G)=A$ and the following additional properties.
\begin{enumerate}[i)]
\item All vertex groups are finitely generated, and all edge groups are cyclic.
\item Every maximal abelian subgroup of $A$ is conjugate to a vertex group of $G$.
\item Each vertex group is either a maximal abelian subgroup of $A$, a \emph{quadratically hanging} subgroup (which implies that it is isomorphic to a finitely generated free group or a surface group) or a \emph{rigid} subgroup. The latter two types are non-abelian.
\item Every edge in $G$ connects a vertex carrying a maximal abelian subgroup to a vertex carrying a non-abelian group (qudratically hanging or rigid).
\item Any automorphism $\phi$ of
$A$ induces an $A$-equivariant isometry $\psi$ of $\widetilde G$ such
that the following diagram commutes
\[ \xymatrix{
A \ar[d]^\phi \ar[r] & \Isom(\widetilde G) \ar[d]^{c_\psi}  \\
A \ar[r] & \Isom(\widetilde G)
}
\]
where $c_\psi$ denotes conjugation by $\psi$.
\end{enumerate}
\end{thm}
We will refer to the graph of groups $G$ as the \emph{canonical JSJ decomposition}. Note that for the naming of the various vertex groups we use the conventions of Bestvina--Feighn \cite{BestvinaFeighn2009}. Note also that the first description of a JSJ decomposition for limit groups very much like the above (without property (v)) can be found in the work of Sela~\cite{Sela2001}.

The canonical JSJ decomposition has one additional property (see \cite[Theorem 3.13(1)]{Bumaginetal2007}) -- it is \emph{universal}, which in particular implies that given any graph of groups decomposition $G'$ of $A$% such that the associated action of $A$ on the tree $\widetilde G'$ is minimal (that is does not admit a proper invariant subtree)
, every rigid vertex group of the canonical JSJ decomposition is (up to conjugation) contained in a vertex group of $G'$.

\begin{lem}
\label{rigid groups}
Let $A$ be a one-ended limit group of level $n$, and let $R$ be a rigid vertex group in the canonical JSJ decomposition $G$ of $A$. Then $R$ is a surface group or a hyperbolic group isomorphic to a free product of finitely many limit groups of level less than $n$.
\end{lem}
\begin{proof}
Alibegovi\'c \cite[Thm 3.3]{Alibegovic2005} showed that a limit group is hyperbolic \iff its maximal abelian subgroups are cyclic. Let $Z$ be a maximal abelian subgroup of $R$. There exists $Z'$, a maximal abelian subgroup of $A$, which contains $Z$. But $Z'$, when acting on $\widetilde G$, fixes a vertex whose stabiliser is maximal abelian. Thus $Z$ also fixes this vertex, and also the (distinct) vertex corresponding to the nonabelian group $R$. Therefore $Z$ fixes an edge in $\widetilde G$, which implies that $Z$ is cyclic.

The definition of level $n$ tells us that either $A$ is a non-trivial free product (which it cannot be, as it is one-ended), or it admits a graph of groups decomposition $G'$ and an epimorphism $\rho \colon A \to B$ to a limit group of level at most $n-1$ such that $\rho$ is injective on every non-abelian, non-free, non-surface vertex group. If $R$ is a surface group or a free group then we have proven the statement (note that $A$ is not free, and hence $n>0$). If $R$ is not free and not a surface group, then the vertex group in $G'$ in which it lies (up to conjugation) is not free, not abelian, and not a surface group. Thus $\rho$ maps $R$ into $B$ injectively, and so $R$ is a finitely generated subgroup of a limit group of level at most $n-1$. A result of Wilton \cite[Lemma 4.7]{Wilton2009} now tells us that $R$ is isomorphic to a free product of finitely many limit groups of level less than $n$.
\end{proof}

%\dawid{no changes from here...}
We now need to discuss malnormality.

\begin{dfn}
Recall that a subgroup $G \leqslant A$ is \emph{malnormal} \iff $a^{-1}Ga \cap G \neq \{1\}$ implies that $a \in G$ for every $a \in A$.

Following Sam Brown, we say that a family of subgroups $G_1, \dots, G_n $ of $A$ is \emph{malnormal} \iff for every $a \in A$ we have that $a^{-1}G_ia \cap G_j \neq \{1\}$ implies that $i=j$ and $a \in G_i$.
\end{dfn}

We will use another property of limit groups and their canonical JSJ decompositions.
\begin{prop}[{\cite[Theorem 3.1(3),(4)]{Bumaginetal2007}}]
Let $A$ be a limit group. Every non-trivial abelian subgroup of $A$ lies in a unique maximal abelian subgroup, and every maximal abelian subgroup is malnormal.
\end{prop}

\begin{cor}
 Let $G_v$ be a non-abelian vertex group in a canonical JSJ decomposition of a one-ended limit group $A$. Then the edge groups carried by edges incident at $v$ form a malnormal family in $G_v$.
\end{cor}
\begin{proof}
Let $Z_1$ and $Z_2$ denote two edge groups carried by distinct edges, $e$ and $e'$ say, incident at $v$. Without loss of generality we may assume that each of these groups is infinite cyclic. Suppose that there exists $g \in G_v$ and a non-trivial $z \in g^{-1} Z_1 g \cap Z_2$. Each $Z_i$ lies in a unique maximal subgroup $M_i$ of $A$. But then the abelian subgroup generated by $z$ lies in both $M_1$ and $M_2$, which forces $M_1=M_2$ by uniqueness. Now
\[
 g^{-1} M_1 g \cap M_1 \neq \{1\}
\]
which implies that $g \in M_1$ (since $M_1$ is malnormal), and so $g^{-1} Z_1 g = Z_1$, which in turn implies that $z \in Z_1 \cap Z_2$.

The edges $e$ and $e'$ form a loop in $G$, and so there is the corresponding element $t$ in $A = \pi_1(G)$. Observe that $t$ commutes with $z$, and so the group $\langle t, z \rangle$ must lie in $M_1$. But this is a contradiction, as $t$ does not fix any vertices in $\widetilde G$.
\end{proof}

We are now going to use \cite[Lemma 2.31]{Brown2016}; we are however going to break the argument of this lemma in two parts.

\begin{lem}[Brown]
\label{brown}
Let $X$ be a connected $M_{-1}$-simplicial complex of dimension at most 2. Let $A = \pi_1(X)$, and suppose that we are given a malnormal family $\{ G_1, \dots, G_n \}$ of infinite cyclic subgroups of $A$. Then, after possibly subdividing $X$, each group $G_i$ fixes a (simplicial) axis $a_i$ in the universal cover of $X$, and the images in $X$ of axes $a_i$ and $a_j$ for $i \neq j$ are distinct.
\end{lem}

In the second part of \cite[Lemma 2.31]{Brown2016} we need to introduce an extra component, namely a simplicial action of a finite group $H$ on $X$, which permutes the groups $G_i$ up to conjugation.
\begin{lem}[Brown]
\label{brown 2}
Let $X$ be a locally $\mathrm{CAT}(-1)$ connected finite $M_{-1}$-simplicial complex of dimension at most 2. Let $A = \pi_1(X)$, and suppose that we are given a family $\{ c_1, \dots, c_n \}$ of locally geodesic simplicial closed curves with images pairwise distinct. Suppose that we have a finite group $H$ acting simplicially on $X$ in a way preserving the images of the curves $c_1, \dots, c_n$ setwise. Then there exists a locally $\mathrm{CAT}(k)$ 2-dimensional finite simplicial complex $X'$ of curvature $k$, with $k<0$,  with a transverse family of  locally geodesic simplicial closed curves $\{ c_1', \dots, c_n' \}$, such that $X'$ is $H$-equivariantly homotopic to $X$, and the homotopy takes $c'_i$ to $c_i$ for each $i$.
\end{lem}
\begin{proof}[Sketch of proof]
The proof of \cite[Lemma 2.31]{Brown2016} goes through verbatim, with a slight modification; to explain the modification let us first briefly recount Brown's proof.

We start by finding two local geodesics, say $c_1$ and $c_2$, which contain segments whose union is a tripod -- one arm of the tripod is shared by both segments. We glue in a \emph{fin}, that is a $2$-dimensional $M_{k}$-simplex, so that one side of the simplex is glued to the shared segment of the tripod, and another side is glued to another arm (the intersection of the two sides goes to the central vertex of the tripod). This way one of the curves, say $c_1$, is no longer locally geodesic, and we replace it by a locally geodesic curve identical to $c_1$ except that instead of travelling along two sides of the fin, it goes along the third side.

The problem is that after the gluing of a fin our space will usually not be locally $\mathrm{CAT}(-1)$ (the third side of the fin introduces a shortcut in the link of the central vertex of the tripod). To deal with this, we first replace simplices in $X$ by the corresponding $M_k$-simplices, and this creates an excess angle $\delta$ (compare also the proof of \cref{blow up 2}). Then gluing in the fin does not affect the property of being locally $\mathrm{CAT}(k)$.

We glue such fins multiple times, until all local geodesics intersect transversely; after each gluing we perform a replacement of simplices to generate the excess angle.

\smallskip
Now let us describe what changes in our argument.
When gluing in a fin, we need to do it $H$-equivariantly in the following sense: a fin is glued along two consecutive edges, say $(e,e')$, and $H$ acts on pairs of consecutive edges. We thus glue in one fin for each coset of the stabiliser of $(e,e')$ in $H$. This way, when we introduce shortcuts in a link of a vertex, no two points are joined by more than one shortcut. Since we are gluing multiple fins simultaneously, we need to make the angle $\pi-\delta$ sufficiently close to $\pi$.
\end{proof}
Note that when we say that the family $\{ c_1', \dots, c_n' \}$ is transverse, we mean that each curve $c_i'$ intersects transversely with the other curves and itself.
%\dawid{...to here}

\begin{thm}
\label{main: NR for limit groups}
Let $A$ be a limit group, and let
\[
 A \to \overline A \to H
\]
be an extension of $A$ by a finite group $H$. Then there exists a complete compact locally CAT($\kappa$) space $X$ realising the extension $\overline A$, where $\kappa = -1$ when $A$ is hyperbolic, and $\kappa=0$ otherwise. When $\kappa=-1$, the space $X$ is of dimension at most $2$.
\end{thm}
\begin{proof}
The proof will be an induction on the level of $A$. Before we start, we will show that assuming the result holds for one-ended limit groups of level $n$, it holds for all limit groups of level $n$.

Consider a limit group $A$ which is not one-ended. We apply the classical version of Stallings theorem to $\overline A$, and split it over a finite group. We will in fact apply the theorem multiple times, so that we obtain a finite graph of groups $G'$ with finite edge groups, with all vertex groups finitely generated and one-ended, and $\pi_1(G') = A$; the fact that we only have to apply the theorem finitely many times follows from finite presentability of $A$ (see~\cite[Theorem 3.1(5)]{Bumaginetal2007}) and Dunwoody's accessibility~\cite{Dunwoody1985}.

The one-ended vertex groups are themselves finite extensions of limit groups, and so for each of them we have a connected metric space to act on by assumption. We now use \cref{blow up} -- the assumption on finite groups fixing points is satisfied since the vertex spaces are complete and CAT(0).\

\smallskip
 We will now assume that $A$, and so $\overline A$, is one-ended.
 As mentioned before, the proof is an induction on the level $n$ of $A$. If $n=0$ then $A$ is a fintely generated free group and we are done by \cref{NR for free groups}. Also, if $A$ is a surface group or a finitely generated free-abelian group, then we are also done by \cref{NR,NR for abelian groups}.

 We apply \cref{JSJ} and obtain a connected graph of groups $G$ with \[\pi_1(G) = A\] (the canonical JSJ decomposition) for which we can extend the natural action of $A$ on $\widetilde G$ to an action of $\overline A$. Taking the quotient by $\overline A$ we obtain a new graph of groups $\Gamma$ with
 \[\pi_1(\Gamma) = \overline A\]
 The edge groups of $\Gamma$ are virtually cyclic, and the vertex groups are finite extensions of finitely generated free or free-abelian groups, or surface groups, or of other rigid vertex groups, which are hyperbolic and isomorphic to free products of finitely many limit groups of smaller level (in view of \cref{rigid groups}).

 Using \cref{NR,NR for free groups,NR for abelian groups} and the inductive hypothesis, for each vertex group $\Gamma_v$ we construct a compact complete NPC space $X_v$ marked by $A_v = A \cap \Gamma_v$, on which $\Gamma_v/A_v$ acts in such a way that the induced extension is isomorphic to $\Gamma_v$. The space $\widetilde X_v$ is CAT($0$). When $G_v$ is hyperbolic, it is actually CAT($-1$) and at most $2$-dimensional. In particular, this is the case for all vertices if $A$ is hyperbolic (and therefore so are all vertex groups, being limit groups whose maximal Abelian subgroups are cyclic).

When $\widetilde X_u$ is (at most) $2$--dimensional and CAT($-1$), we can easily triangulate it $\Gamma_u$-equivariantly using convex simplices;
we can then replace each $2$-simplex by the corresponding $2$-simplex of type $M_{-1}$, and the resulting space is still CAT($-1$).
Thus, $\widetilde X_u$ and $X_u$ have the structure of (at most) $2$-dimensional $M_{-1}$-simplicial complexes, the latter being finite. Moreover, we can triangulate it in such a way that each axis fixed by an infinite cyclic group carried by an edge incident at $u$ is also simplicial. Observe that $\Gamma_v/A_v$ permutes these axes, and so each of the corresponding edge groups in $\Gamma$ preserves such an axis as well.

%We argue in the analogous manner for spaces $X_u$ which are trees.

 Now we apply \cref{brown} and conclude that distinct axes do not coincide. Thus we may use \cref{brown 2}, and replace $X_u$ by a new $\mathrm{CAT}(-1)$ $M_{-1}$-simplicial complex (after rescaling) of dimension at most 2, which has only finitely many isometry classes of simplices, and in which our axes intersect each other and themselves transversely.

Observing that each infinite edge group preserves an axis in each of the relevant vertex spaces by \cref{axis}, we  apply \cref{blow up 2}, and take the resulting space to be $X$. The result is NPC in any case, and if $A$ was hyperbolic (and therefore all vertex spaces were CAT($-1$)), it is locally CAT($-1$) as claimed.
\end{proof}

\bibliographystyle{math}
\bibliography{raags}

\bigskip

\noindent
\textsc{Sebastian Hensel} \hfill \textsc{Dawid Kielak} \newline
Mathematisches Institut  \hfill Fakult\"at f\"ur Mathematik  \newline
Universit\"at Bonn  \hfill Universit\"at Bielefeld \newline
Endenicher Allee 60 \hfill Postfach 100131  \newline
D-53115 Bonn \hfill D-33501 Bielefeld \newline
Germany \hfill Germany \newline
\texttt{hensel@math.uni-bonn.de} \hfill \texttt{dkielak@math.uni-bielefeld.de}

\end{document}